\newtheorem{theorem}{Theorem} 
\newtheorem{lemma}[theorem]{Lemma}
\newtheorem{assumption}[theorem]{Assumption}
\newtheorem{remark}{Remark}
\newtheorem{example}{Example}
\newcommand{\mc}{\mathcal}
\newcommand{\Ker}{\operatorname{Ker}}
\newcommand{\Basis}{\operatorname{Basis}}
\newcommand{\trace}{\operatorname{Trace}}
\newcommand{\real}{\mathbb{R}}
\newcommand{\realpos}{\mathbb{R}_{\geq 0}}
\newcommand{\complex}{\mathbb{C}}
\newcommand{\transpose}{\mathsf{T}}  
\newcommand{\tsp}{\mathsf{T}} 
\newcommand{\inv}{{-1}} 
\newcommand{\negat}{\scalebox{0.75}[.9]{\( - \)}}
\newcommand\oprocendsymbol{\hbox{$\square$}}
\newcommand\oprocend{\relax\ifmmode\else\unskip\hfill
\fi\oprocendsymbol}
\newcommand{\map}[3]{#1: #2 \rightarrow #3}
\newcommand{\setdef}[2]{\{#1 \; : \; #2\}}
\newcommand{\subscr}[2]{{#1}_{\textup{#2}}}
\newcommand{\supscr}[2]{{#1}^{\textup{#2}}}
\newcommand{\until}[1]{\{1,\dots,#1\}}
\begin{document}
%
\title{Gramian-Based Optimization for the Analysis and Control of Traffic Networks} 
%
%
%

\author{Gianluca~Bianchin,~\IEEEmembership{Student~Member,~IEEE,}
        and~Fabio~Pasqualetti,~\IEEEmembership{Member,~IEEE}
\thanks{
	This material is based upon work supported in part by ARO award 
	71603NSYIP, and in part by NSF award CNS1646641.
The authors are with the Department of Mechanical Engineering, 
University of California, Riverside,
 \{\href{mailto:gianluca@engr.ucr.edu}{\texttt{gianluca}},
\href{mailto:fabiopas@engr.ucr.edu}{\texttt{fabiopas\}@engr.ucr.edu}}}
}

\maketitle

\begin{abstract}
This paper proposes a simplified version of classical models for urban 
transportation networks, and studies the problem of controlling intersections 
with the goal of optimizing network-wide congestion.
Differently from traditional approaches to control traffic signaling, a
simplified framework allows for a more tractable analysis of the network overall 
dynamics, and enables the design of critical parameters 
while considering network-wide measures of efficiency.
Motivated by the increasing availability of real-time high-resolution traffic 
data, we cast an optimization problem that formalizes the goal of 
minimizing the overall network congestion by optimally controlling the 
durations of green lights at intersections.
Our formulation allows us to relate congestion objectives with the problem of 
optimizing a metric of controllability of an associated dynamical network.
We then provide a technique to efficiently solve the optimization by 
parallelizing the computation among a  group of distributed agents.
Lastly, we assess the benefits of the proposed modeling and optimization 
framework  through microscopic simulations on typical traffic commute 
scenarios for the area of Manhattan.
The optimization framework proposed in this study is made available online 
on a Sumo microscopic simulator based interface \cite{gitHubCode}.
\end{abstract}


%
\IEEEpeerreviewmaketitle



\section{Introduction}
Effective control of transportation systems is at the core of 
the smart city paradigm, and has the potential for improving  
efficiency  and reliability of urban mobility.
%
Modern urban transportation architectures comprise two fundamental 
components: traffic intersections and interconnecting roads. 
Intersections connect and regulate conflicting traffic flows 
among adjacent roads, and their effective control can 
sensibly improve travel time and prevent congestion.
Congestion is the result of networks operating close to their capacity, 
and often leads to degraded throughput and increased travel time.

The increasing availability of sensors for vehicle detection and flow estimation, 
combined with modern communication capabilities (e.g. vehicle-to-vehicle 
(V2V) and vehicle-to-infrastructure (V2I) communication), have inspired the 
development of infrastructure control algorithms  that are adaptive 
\cite{varaiya2013max}, that is, policies that adjust the operation of the system 
based on the current traffic conditions. 
Nevertheless, the remarkable complexity of modern urban transportation 
infrastructures has recently promoted the diffusion of control policies that are 
distributed, that is, algorithms that adapt the operation of individual network 
components based on partial knowledge of the current network state and 
dynamics. 
The lack of a global network model, capable of capturing the interactions 
between spatially-distributed components and capable of modeling all the 
relevant network dynamics,   
often results in suboptimal performance \cite{diakaki2002multivariable}.
In this paper, we propose a simplified model to capture the time and spatial
relationships between traffic flows in urban traffic networks in regimes of 
free-flow.
The model represents a tradeoff between accuracy and tractability, and sets 
out as a tractable framework to study efficiency and reliability of this class of 
dynamical systems. 
The proposed framework is employed in this work for the control of green split 
times at the signalized intersections.

{\bf Related Work:}
The design of feedback policies for the control of urban 
infrastructures is an intensively studied topic, and the proposed 
techniques can mainly be divided into two categories: 
routing policies and intersections control.
Routing policies use a combination of turning preferences and 
speed limits in order to optimize congestion objectives, and have 
been studied both in a centralized \cite{lovisari2014stability} and 
distributed \cite{ba2015distributed} framework.
Conversely, intersection control refers to the design of the 
scheduling of the (automated) intersections so that the flow through  
intersections is  maximized, and can be achieved 
(i) by controlling the signaling sequence and offset, and/or 
(ii) by designing the durations of the signaling phases. 
The control of signals offset typically aims at tuning the synchronization of 
green lights between adjacent  intersections in order to produce 
green-wave effects \cite{gomes2015bandwidth,coogan2017offset}, 
and consists of solving a group of optimization problems that take into account 
certain subparts of the infrastructure, while minimizing metrics such as the 
number of stops experienced by the vehicles.
%
In contrast, the durations of green time at intersections impact the behavior of 
certain traffic flows within the network, and plays a significant role in the 
efficiency of large-scale dynamical networks \cite{diakaki2002multivariable}.

Widely-used distributed signaling control methods include 
SCOOT \cite{hunt1982scoot}, 
RHODES \cite{mirchandani2001real}, 
OPAC \cite{gartner2001implementation}, and emerge as the most common 
techniques currently employed in major cities.
The sub-optimal performance of the above methods 
\cite{wongpiromsarn2012distributed} has motivated the 
development of max-pressure techniques \cite{varaiya2013max}.
Max-pressure methods use a discrete-time model where 
queues at intersections have unlimited queue lengths. 
Under this  assumption, max pressure is proven to maximize the 
throughput by stabilizing the network.
%
Centralized policies require higher modeling efforts but, in general,
have better performance guarantees 
\cite{papageorgiou2003review}.
Among the centralized policies, the Traffic-Responsive Urban 
Control  framework \cite{diakaki2002multivariable} has received 
considerable  interest for its simplicity and good performance.
Based on a store-and-forward modeling paradigm, the method 
consists of optimizing network queue lengths through a 
linear-quadratic regulator problem that uses a relaxation of the 
physical constraints to abide with the high complexity.
Variations of these techniques to incorporate physical constraints 
have been studied in \cite{aboudolas2009store,de2010multi}.
The increased complexity of urban traffic networks has recently motivated the 
development of simplified (averaged) models to deal with the switching nature 
of the traffic signals \cite{canudas2017average}. 
However, the highly-nonlinear behavior of this class of dynamical systems 
still limits our capability to consider adequate optimization and prediction 
horizons \cite{grandinetti2018}, and the development of tractable models 
capable of capturing all the relevant network  dynamics is still an open 
problem.

{\bf Contribution:}
Motivated by the considerable complexity of traditional models for urban 
transportation systems, in this paper we propose a simplified framework to 
capture the behavior of traffic networks that are operating close 
to the free-flow regime.
In this model, each road is associated with multiple state-variables, 
representing the spatial evolution of traffic densities within the road.
This assumption allows us to capture the non-uniform spatial displacement of 
traffic within each road, 
and to construct a simplified network model that results in a more-%
tractable framework for optimization.

We employ the proposed model to formalize the goal of 
optimally designing the durations of green splits at intersections, and we  
provide a connection with the optimization of  a metric of controllability  for a 
dynamical model associated with the traffic network.
To the best of the authors' knowledge, this work represents a novel, 
computationally-tractable, method to perform network-wide optimization of 
the green-splits durations at intersections.
We provide conditions that guarantee network stability, and we  
characterize the performance of the system in relation to the overall 
network congestion.
We use the concept of smoothed spectral abscissa 
\cite{vanbiervliet2009smoothed} to numerically solve the 
optimization, and we demonstrate the benefits of our 
methods through of a microscopic simulator on the area of Manhattan.
We characterize the complexity of our algorithms, and propose a method to 
parallelize the computation in order to be more-efficiently executed by a group 
of distributed cooperating agents.
Our results and simulations suggest that the increased system performance 
and stability obtained by the methods justify the increment in complexity 
deriving from a network-wide model description.
%

{\bf Organization:}
The rest of this paper is organized as follows. 
Section \ref{sec: problem setup} presents the problem setup, relates the 
underlying assumptions to previously-established traffic models, 
and formalizes the goal of minimizing an overall measure of  network 
congestion by selecting the durations of the green split times at the 
intersections.
Section \ref{sec: optimal design} discusses our approach to solve 
the resulting nonlinear optimization problem.
The solution method is at first presented from a centralized perspective, and 
subsequently adapted for implementation over a distributed architecture 
(Section~\ref{sec:distributed}).
In the distributed paradigm, the optimization shall be solved by a group of 
cooperating agents, each with limited (local) knowledge of the physical 
network interconnection.
Section \ref{sec: simulations} is devoted to numerical simulations that 
validate our assumptions and the solution technique. 
Section \ref{sec: conclusions}  concludes the  paper.

\section{Dynamical Model of Traffic Networks and Problem Formulation}
\label{sec: problem setup}
We model urban traffic networks as a group of one-way roads interconnected 
through signalized intersections. 
Within each road, vehicles move at the free-flow velocity, while traffic flows 
are exchanged between adjacent roads by means of the signalized 
intersection connecting them. 
In this section, we discuss a concise dynamical model for traffic networks in 
regimes of free flow, that will be employed for the analysis.

\subsection{Model of Road and Traffic Flow}
Let $\mc N = (\mc R, \mc I)$ denote a traffic network
with roads $\mc R = \{r_1,\dots,r_{\subscr{n}{r}}\}$ and intersections 
$\mc I = \{\mc I_1, \dots, \mc I_{n_{\mc I}} \}$. 
Each element in the set $\mc R$ models a one way road, 
whereas intersections regulate conflicting flows of traffic
among adjacent roads (see Section \ref{subsec: intersections}).
We assume that exogenous inflows enter the network at (source) 
roads $\mc S \subseteq \mc R$ and, similarly, vehicles 
exit  the  network at (destination) roads $\mc D \subseteq \mc R$, 
with $\mc S \cap \mc D = \varnothing $.
%
The following standard connectivity assumption ensures that vehicles are 
allowed to leave the network.
\begin{assumption}
For every road $r_i \in \mc R$ there exists at least one path in $\mc N$ from 
$r_i$ to a road $r_j \in \mc D$.
\end{assumption}
\noindent
We denote by $\ell_i \in \real$ the length of road $r_i$, and
we model each road $r_i$ by discretizing it into  
$\sigma_{i} = \lceil \ell_i / h \rceil$ cells
(see Fig.~\ref{fig: road discretized}), 
where the parameter $h \in \realpos$ is a constant discretization step.
\begin{figure}[tb]
  \centering
    \includegraphics[width=.8\columnwidth]{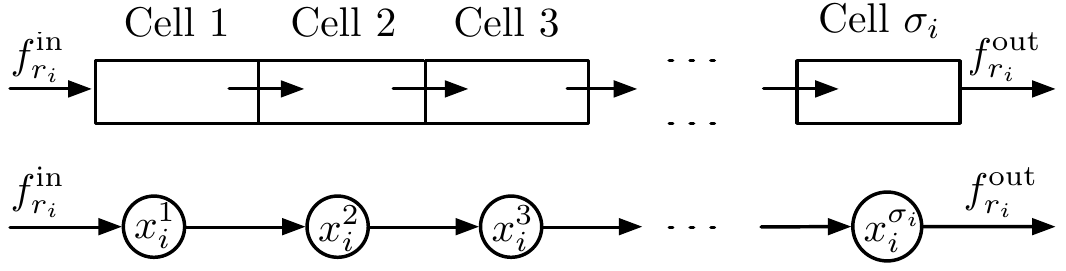}
\caption[]{Road discretization and corresponding network model. 
The portion of road comprised between spatial coordinates 
$s_{k-1}$   and $s_{k}$, $ k \in \until{\sigma_i} $ is referred to 
as cell $k$, and variable $x_i^k$ denotes the cell density.}
  \label{fig: road discretized}
\end{figure}
We denote by $x_i^k \in \real$ the traffic density\footnote{The terms density 
and occupancy will be used interchangeably in the remainder of this paper.} 
associated with the $k$-th cell of  road $r_i$, $k \in \until{\sigma_i}$.
We assume that inflows of vehicles $\supscr{f}{in}_{r_i}$ 
enter the road in correspondence of its upstream cell (i.e. $k=1$);
accordingly,  outflows $\supscr{f}{out}_{r_i}$ leave the road in
correspondence of its downstream cell (i.e. $k=\sigma_i$).
We model the relation between traffic flows and cell density by 
assuming that flows of vehicles move in regimes of free flow with constants 
velocity $\gamma_i$, which represents the average speed along link $r_i$. 
Then, the  dynamics of the road state
$x_i = [x_i^1 \, \cdots \, x_i^{\sigma_i}]^\transpose$ are described by:
\begin{align}
\label{eq: road dynamics}
  \begin{bmatrix}
    \dot x_i^1 \\ \dot  x_i^2 \\ \vdots \\ \dot  x_i^{\sigma_i}
  \end{bmatrix}
  =
  \underbrace{
  \frac{\gamma_i}{h}
  \begin{bmatrix}
    -1 &  \\
    1 & \negat 1 &  \\
    & \ddots & \ddots  & \\
    & & 1 & 0\\
  \end{bmatrix}}_{D_i}
  \begin{bmatrix}
    x_i^1 \\ x_i^2 \\ \vdots \\ x_i^{\sigma_i}
  \end{bmatrix}
  +
  \begin{bmatrix}
    \supscr{f}{in}_{r_i}   \\ 0 \\ \vdots \\ -\supscr{f}{out}_{r_i}
  \end{bmatrix}.
\end{align}
Differently from conventional dynamical models for urban links
(e.g. \cite{daganzo1994cell}), the space discretization technique 
\eqref{eq: road dynamics} allows us to capture the fact that the density of 
vehicles may not be uniform along the road. 
Notably, this feature plays a key role in modeling the road outflows 
in correspondence of signalized intersections 
(see Section~\ref{subsec: intersections}).

\begin{figure}[tb]
  \centering
    \includegraphics[width=.6\columnwidth]{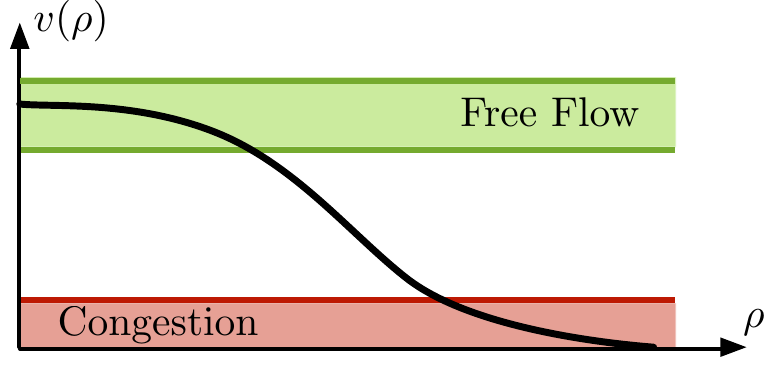}
  \caption[]{Fundamental diagram describing the static speed-density  
  relation. 
The almost-flat behavior in regimes of free-flow and heavy congestion allows 
us to approximate $\gamma ( \rho_i ) \approx \gamma_i$, as the speed of the 
of flow is  not sensibly affected  by changes in density.}
  \label{fig: speed-density relation}
\end{figure}

\begin{remark}{\bf \emph{(Equivalence Between
\eqref{eq: road dynamics} and Hydrodynamic Models)}}
\label{rem: hydrodynamic model}
The dynamical model \eqref{eq: road dynamics}
derives from the mass-conservation continuity equation 
\cite{treiber2013traffic} in certain traffic regimes, as we explain next.
Let the function $\rho_{i} = \rho_{i}(s,t) \geq 0$ denote the 
(continuous) density  of vehicles within road $r_i$ at the spatial 
coordinate $s \in [0,\ell_i]$ and time $t \in \realpos$.
Let $f_{i} = f_{i}(s,t) \geq 0$ denote the (continuous) 
flow of vehicles along the road, and let traffic densities and flows
follow the hydrodynamic relation 
 \begin{align*}
 \frac{ \partial \rho_{i}}{ \partial t} + 
 \frac{ \partial f_{i}}{ \partial s} = 0.
 \end{align*}
We first complement the above continuity equation with
the Lighthill-Whitham-Richards static relation,  $f_{i} = f_i(\rho_{i}) $, 
in which traffic flows instantaneously change with the density.
Then, we include the speed-density fundamental relation  
$f_{i} = \rho_{i}  v(\rho_{i}) $, where  
$\map{v}{\realpos}{\realpos}$ represents the speed of the traffic 
flow (see Fig.~\ref{fig: speed-density relation}), to obtain
\begin{align*}
 \frac{ \partial \rho_{i}}{ \partial t} +  \left( v(\rho_{i}) 
 + \rho_{i}   \frac{d ~ v(\rho_{i})}{d \rho_{i}} \right) 
 \frac{\partial \rho_{i}}{\partial s} = 0.
\end{align*}
Solutions to the above relation are kinematic waves 
\cite{lighthill1955kinematic} moving at speed 
$\gamma (\rho_{i}) = v(\rho_{i})  
+ \rho_{i} \frac{d v(\rho_{i})}{d \rho_{i}} $.
We consider regimes of free flow where the speed of the kinematic wave can 
be approximated as $\gamma ( \rho_i ) \approx \gamma_i$.
As illustrated in Fig.~\ref{fig: speed-density relation}, this 
approximation is accurate in regimes of free flow or  
congestion, characterized by 
$\frac{d v(\rho_{i})}{d \rho_{i}} \approx 0$.
Therefore, we let $\gamma_i$ denote the average speed of the flow 
along  the link, and consider the approximated continuity equation
\begin{align*}
 \frac{ \partial \rho_{i}}{ \partial t} + 
  \gamma_i ~ \frac{ \partial \rho_{i}}{ \partial s} = 0.
\end{align*}
We then discretize in space the above linear continuity equation, by  defining  
the discrete spatial coordinate
\begin{align*}
s_k = k h, & & k \in \{ 0 , \dots ,\sigma_i \},
\end{align*}
and by replacing the partial derivative with respect to $s$ with 
the difference quotient
\begin{align*}
 \frac{ \partial \rho_{i}(s_k,t)}{ \partial t} = 
-  \gamma_i \; \frac{\rho_{i}(s_{k},t) - \rho_{i}(s_{k-1},t)}{h} 
 .
\end{align*}
This discretization leads to the dynamical model  
\eqref{eq: road dynamics} after introducing the boundaries 
conditions, represented by inflows $\supscr{f}{in}_{r_i}$ and
outflows $\supscr{f}{out}_{r_i}$, and by replacing the spatially 
discretized density $\rho_{i}(s_k,t)$  with the functions of time  $x_i^k$.
  \oprocend
\end{remark}

\subsection{Model of Intersection and Interconnection Flow}
\label{subsec: intersections}
Signalized intersections alternate the right-of-way of vehicles to
coordinate and secure conflicting flows between adjacent roads. 
Every signalized intersection $\mc I_j \in \mc I$, $j \in \until{n_{\mc I}}$, 
is modeled as a set $\mc I_j \subseteq \mc R \times \mc R$, 
consisting of all allowed movements between the intersecting roads.
For road $r_i \in \mc R$, let $\subscr{\mc I}{in}^{r_i}$ denote the (unique) 
intersection at the road upstream; similarly, let  $\subscr{\mc I}{out}^{r_i}$ 
denote the (unique) intersection at the road downstream.
We model the effect of signalized intersections through a set of green split 
functions 
$\map{s}{\mc R \times \mc R \times \realpos}{\{0,1\}}$
that assume boolean values $1$ (green phase) or $0$ (red phase), and
let the road inflows be
\begin{align}\label{eq: interconnection flows}
  \begin{split}
    \supscr{f}{in}_{r_i} &= \sum_{(r_i,r_k) \in \subscr{\mc I}{in}^{r_i}} 
    s(r_i,r_k, t) \,
    f(r_i , r_k ) + u_{r_i} ,\\
    \supscr{f}{out}_{r_i} &= \sum_{(r_k,r_i) \in \subscr{\mc I}{out}^{r_i}}
     s(r_k,r_i, t) \,
    f(r_k, r_i ) + w_{r_i},
  \end{split}
\end{align}
where $\map{f}{\mc R \times \mc R}{\realpos}$ denotes the intersection
transmission rate. 
It is worth noting that the notation $f (r_i , r_k )$ represents the transmission 
rate from road $r_k$ to  $r_i$ and, similarly, $s(r_i,r_k, t)$ denotes the green 
split function that controls traffic flows from $r_k$ and directed to  $r_i$.
In \eqref{eq: interconnection flows}, we incorporated exogenous inflows and 
outflows to roads (flows that are not originated or merge to modeled 
intersections or roads) into  
$ \supscr{f}{in}_{r_i}$ and $ \supscr{f}{out}_{r_i}$ respectively, 
by  defining the input term $\map{u_{r_i}}{\realpos}{\realpos}$ and  
output term $\map{w_{r_i}}{\realpos}{\realpos}$. 
We note that $u_{r_i} \neq 0$ if and only if $r_i \in \mc S$, and
$w_{r_i} \neq 0$ if and only if $r_i \in \mc D$.

\begin{figure}[t]
  \centering
    \includegraphics[width=\columnwidth]{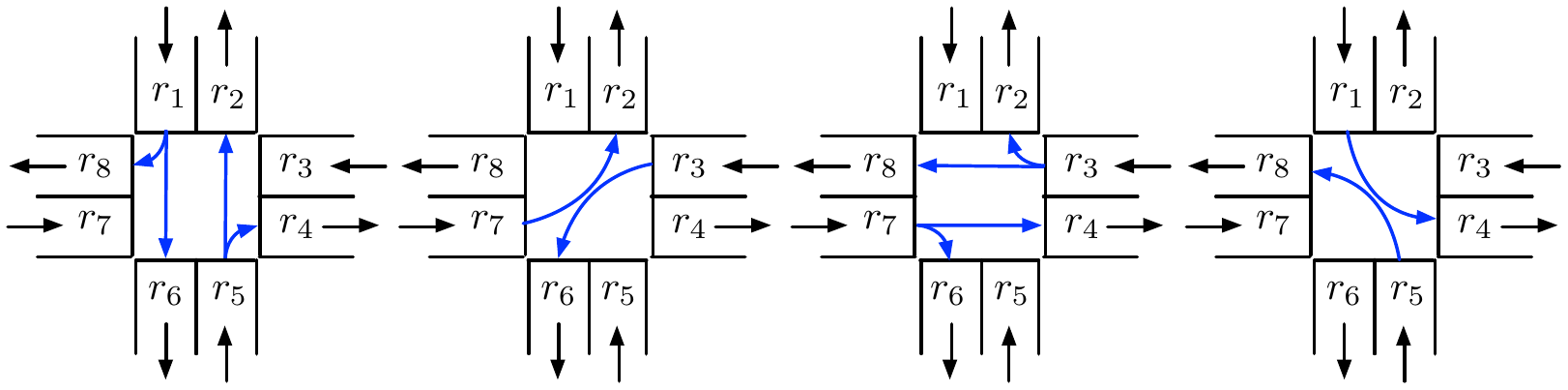}
\caption[]{Typical set of phases at a four-ways intersection.}
\label{fig: phases}
\end{figure}
\begin{figure}[t]
\centering
\includegraphics[width=.8\columnwidth]{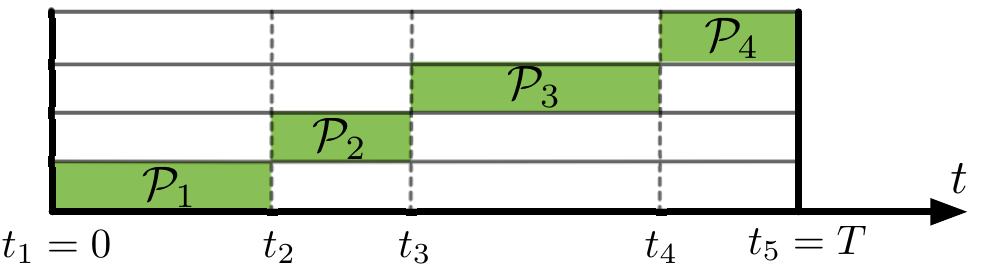}
\caption[]{Green splits for the four ways intersection in 
Fig.~\ref{fig: phases}. 
}
\label{fig: green splits}
\end{figure}

\begin{example}{\bf \emph{(Example of Intersections and Scheduling 
Functions)}}
\label{example: scheduling functions}
Consider the four-ways intersection illustrated in Fig.~\ref{fig: phases}.
The intersection is modeled through the set of allowed movements:
\begin{align*}
\mc  I_1 = 
	\{(r_1&,r_6),(r_1,r_8),(r_5,r_2),(r_5,r_4),(r_7,r_2),(r_3,r_6), \\
	&(r_3,r_8),(r_3,r_2),(r_7,r_4),(r_7,r_6),(r_5,r_8),(r_1,r_4) \}.
\end{align*}
Intersections are often partitioned into a group of phases, where each phase 
represents a set of movements that can occur simultaneously across the 
intersection. For intersection $\mc I_1$, a typical set of phases is
\begin{align*}
\mc P_1 &= \{(r_1,r_6),(r_1,r_8),(r_5,r_2),(r_5,r_4)\} , \\
\mc P_2 &= \{(r_7,r_2),(r_3,r_6)\}, \\
\mc P_3 &= \{(r_3,r_8),(r_3,r_2),(r_7,r_4),(r_7,r_6)\}, \\
\mc P_4 &= \{(r_5,r_8),(r_1,r_4)\}. &
\end{align*}
The green split function is then defined by alternating the available phases 
within a cycle time $T \in \realpos$, that is, for all 
$j \in \until 4$,
\begin{align*}
s(r_i,r_k,t) &=
  \begin{cases}
    1 & \text{if }  (r_i,r_k) \in \mc P_j  \text{ and } t \in [t_j,t_{j+1}),\\
    0 & \text{otherwise},
  \end{cases}
\end{align*}
where $t_j \in \realpos$ are the switching times.
See Fig.~\ref{fig: green splits} for a graphical illustration.
\oprocend
\end{example}


Next, we make critical use of the underlying road-discretization technique 
\eqref{eq: road dynamics}, and model transmission rates as functions 
proportional to the occupancy of the cell at downstream of each road, that is,
\begin{align}\label{eq: flows}
  f(r_i , r_k ) = c(r_i, r_k) x_{k}^{\sigma_k} ,
\end{align}
where $\map{c}{\mc R \times \mc R}{\real}$ is a parameter that
models the speed of the outflow, and includes the 
average routing ratio of vehicles  entering road $r_i$ from $r_k$. 
Although \eqref{eq: flows} constitutes an approximation of the intersection 
transmission rate, which often contains saturation terms 
(e.g. see \cite{allsop1971delay} for a discussion), the combination of
\eqref{eq: flows} with the proposed space-discretization technique 
\eqref{eq: road dynamics} performs well in practice (see 
Section~\ref{sec: simulations} for numerical validation of \eqref{eq: flows}), 
at least in the considered regimes.
\begin{remark}{\bf \emph{(Turning Rates and Conservation of Flows at 
Intersections)}}
\label{rem: turning rates}
The existence of independent lanes for different turning preferences in
proximity of intersections often leads to the decomposition
$c(r_i, r_k) = \varphi(r_i, r_k) \phi(r_i, r_k)$, where the function
$\map{\varphi}{\mc R \times \mc R}{[0,1]}$ represents the constant
average routing ratio of vehicles entering road $r_i$ from $r_k$.
The conservation of flows at intersections can be guaranteed by letting 
$\sum_i \varphi(r_i,r_k) = 1$.
 \oprocend
\end{remark}
We emphasize that the proposed model represents a simplified 
framework with respect to traditional, more-established,  models. 
In fact, the assumptions we make are realistic for systems that are operating 
close to their free-flow regime, and do not take into account collateral 
phenomena, such as back-propagation in regimes of congestion, which would
require ad-hoc adjustments in the model.


\subsection{Switching and Time-Invariant Traffic Network Dynamics}
\label{sec: state space averaging}
Individual road dynamics can be combined into a joint network model 
that captures the interactions among all modeled routes and intersections.
To this aim, we adopt an approach similar to \cite{de2010multi}, and assume 
that exogenous outflows are proportional to the number of vehicles in the road, 
that is, $w_{r_i} = \bar w_{r_i} x_i^{\sigma_i}$, $\bar w_{r_i} \in [0,1]$.
By combining Equations \eqref{eq: road dynamics},
\eqref{eq: interconnection flows} and \eqref{eq: flows}, we obtain
\begin{align}\label{eq: switching model}
  \begin{split}
    \underbrace{
      \begin{bmatrix}
        \dot x_1 \\ \dot x_2 \\ \vdots \\ \dot x_{\subscr{n}{r}}
      \end{bmatrix}}_{\dot x}
    =
    \underbrace{
      \begin{bmatrix}
        A_{11} & A_{12} & \cdots & A_{1\subscr{n}{r}} \\
        A_{21} & A_{22} & \ddots & A_{2\subscr{n}{r}} \\
        \vdots & \ddots & \ddots & \vdots \\
        A_{\subscr{n}{r} 1} & A_{\subscr{n}{r} 2} & \cdots & A_{
          \subscr{n}{r} \subscr{n}{r}} \\
      \end{bmatrix}}_{A}
    \underbrace{
      \begin{bmatrix}
        x_1 \\ x_2 \\ \vdots \\ x_{\subscr{n}{r}}
      \end{bmatrix}}_{x}
          +
\quad \quad \quad \quad \quad
          &
    \\ 
    +
    \underbrace{
      \begin{bmatrix}
        I_{n_1} & 0 & \cdots & 0\\
        0 & I_{n_2} & \ddots & 0 \\
        \vdots & \ddots & \ddots & \vdots \\
        0 & 0 & \cdots &  I_{\subscr{n}{r}} \\
      \end{bmatrix}}_{B}
    \underbrace{
      \begin{bmatrix}
        u_1 \\ u_2 \\ \vdots \\ u_{\subscr{n}{r}}
      \end{bmatrix}}_u,
      &
  \end{split}
\end{align}
where $A \in \real^{n \times n}$, $n = \sum_{i=1}^{\subscr{n}{r}} \sigma_i$ is 
the overall number of modeled network cells, $u$ derives from 
\eqref{eq: interconnection flows}, and
\begin{align*}
  A_{ik} \!\! &= \!\!
  \begin{cases}
    s(r_i, r_k, t ) c(r_i, r_k) e_1 e_{\sigma_k}^\transpose, 
    				& \text{if }  i\neq k,\\
    D_i \! - \! \left( 
    		\sum_{\ell} s(r_\ell , r_i, t ) c(r_\ell, r_i) + \bar w_{r_i} \right)
    e_{\sigma_i}  e_{\sigma_i}^\transpose , & \text{if } i=k,
  \end{cases}
\end{align*}
where $e_i  \in \real^{\sigma_i}$ denotes the $i$-th canonical vector of size 
$\sigma_i$.

We note that the matrix $A$ in \eqref{eq: switching model} 
is typically sparse, because not all roads are adjacent in the network
interconnection, and its sparsity pattern varies over time as determined by the 
green splits  $s(r_i, r_k, t )$.
Thus, the network model \eqref{eq: switching model} represents a linear 
switching system, where the switching signals are the green split 
functions.

\begin{figure}[tb]
  \centering \subfigure[]{
    \includegraphics[width=.46\columnwidth]{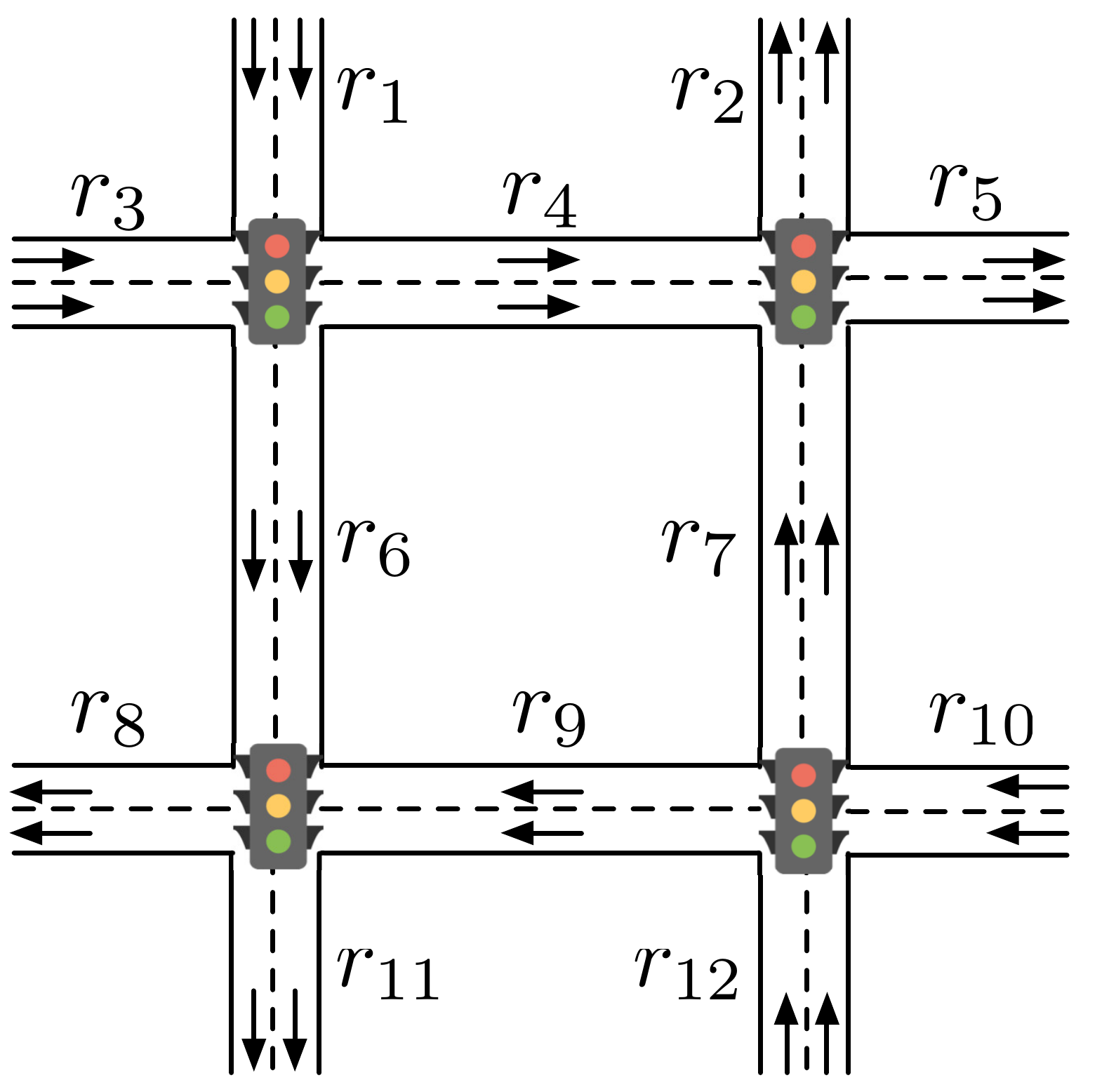} 
    \label{fig: 4 int netw a}  } 
 \subfigure[]{
    \includegraphics[width=.46\columnwidth]{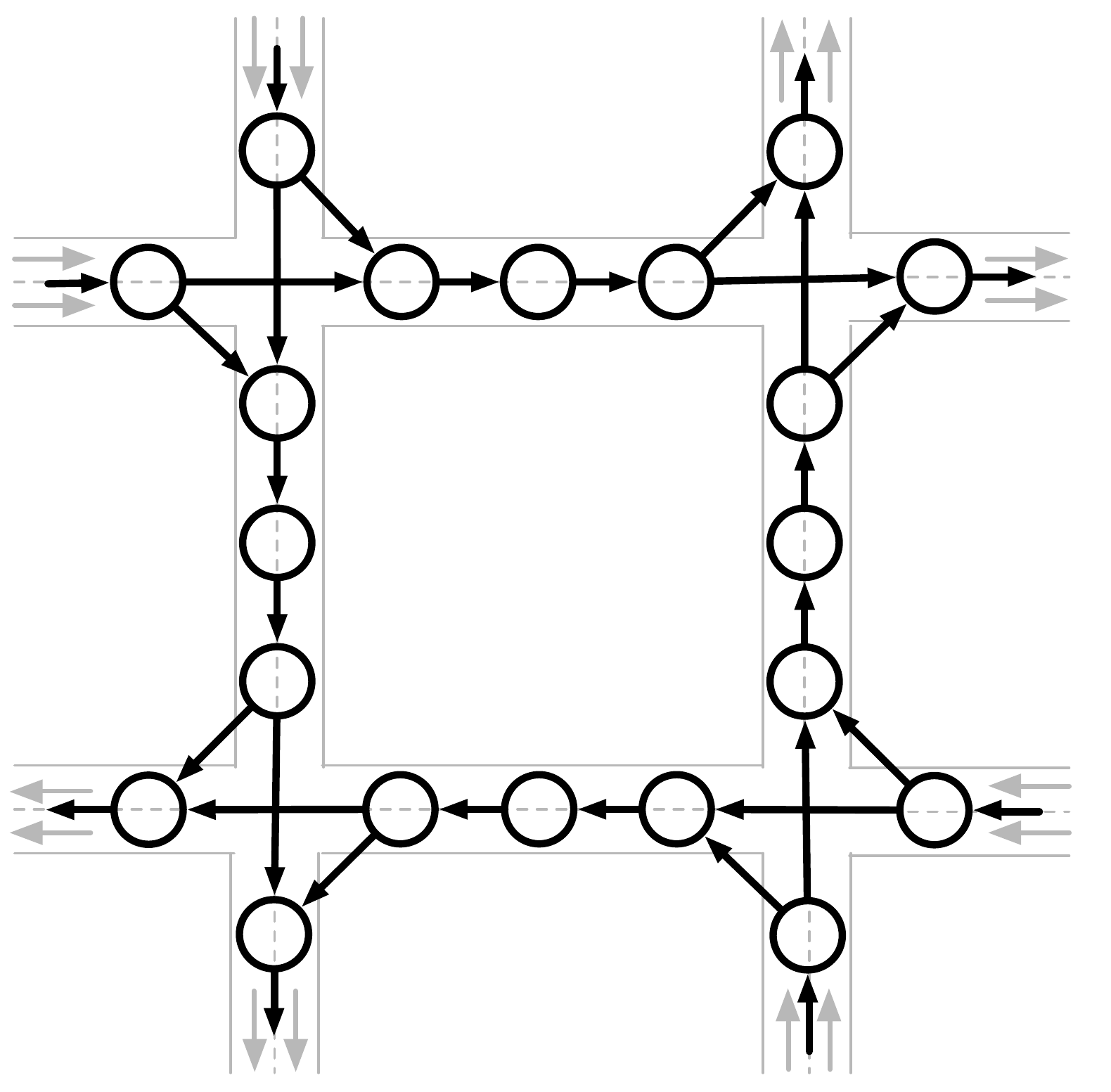}  
    \label{fig: 4 int netw b}}
  \caption[]{Network model associated with a traffic network composed of  
  $n_{\mc I}=4$ intersections and   $\subscr{n}{r}=12$ roads. 
  Each road is associated with a set of states that represent the density of the 
  cells within the roads.}
\label{fig: 4 int netw}
\end{figure}

\begin{example}{\bf \emph{(Example of Traffic Network)}}
\label{example: network dynamics}
Consider the network illustrated in Fig. \ref{fig: 4 int netw},
with $\mc R = \{ r_1, \dots , r_{12} \}$ and $\mc I = \{ \mc I_1, \dots , \mc I_4 \}$. 
The network comprises four destination roads 
$\mc D = \{r_2, r_5, r_8, r_{11} \}$ 
($\bar w_{r_i} = 1$ for all $r_i \in \mc D$, and $\bar w_{r_i} = 0$ otherwise), 
and four source roads 
($\mc S = \{r_1, r_3, r_{10}, r_{12} \}$, with $u_{r_i} \neq 0$ only if  
$r_i \in \mc S$).
Let $\ell_i/h=3$ and $\gamma_i/h = 3$ for all $i \in \until{n_r}$.
Then, the matrices in \eqref{eq: switching model} read as
\begin{align*}
A_{ii} &= 
\begin{bmatrix}
-1 \\
1 & -1 \\
   & 1  & -\left( \sum_{\ell} s(r_\ell,r_i,t) c(r_\ell,r_i) + \bar w_{r_i} \right)
\end{bmatrix}, \\
A_{ij} &= 
\begin{bmatrix}
0 & \quad \; 0 & \quad \quad \quad  \;s(r_i,r_j,t) c(r_i,r_j) \quad \quad \quad 
	\quad \\
0 & \quad \; 0 & 0\\
0 & \quad \; 0 & 0
\end{bmatrix},
\end{align*}
for all $i, j \in \until{\subscr{n}{r}}$. 
Notice that $s(r_i,r_j,t) = 0$  for all times if $(r_j,r_k) \not \in \mc I_k$
for all $k \in \until {n_{\mc I}}$.

\oprocend
\end{example}

Next, we make the typical assumption that scheduling functions are periodic, 
with period $T \in \real_{>0}$. 
That is, for all $(r_i,r_k) \in \mc I_j$, 
$j \in \until{n_{\mc I}}$, and for all times $t$:
\begin{align*}
  s(r_i,r_k,t) = s(r_i,r_k,t + T) .
\end{align*}
Let $\mc T = \{\tau_1, \dots, \tau_m\}$ denote the set of time instants
 when a scheduling function changes its value, that is,
\begin{align*}
  \mc T = \setdef{\tau \in [0,T]  }{  \exists (&r_i,r_k) \in \mc I,\\
 &  \lim_{t \rightarrow \tau^-} \!\!
  s(r_i,r_k,t) \!\neq \!\!\lim_{t \rightarrow \tau^+}  \!\!
  s(r_i,r_k,t)} .
\end{align*}
Notice that the network matrix A in \eqref{eq: switching model} 
remains constant between any  consecutive time instants  
$\tau_{i-1}$ and $\tau_i$. 
We denote each constant matrix by $A_i$, and refer to it as to the $i$-th 
\textit{network mode}. 
Further, let $d_i = \tau_i - \tau_{i-1}$, with $i \in \until m$ and 
$\tau_0 = 0$, denote the duration of the $i$-th network mode. 
We employ a state-space averaging technique \cite{middlebrook1976general}
and define a linear-time invariant approximation of the switching 
network model  \eqref{eq: switching model}:
\begin{align}
\label{eq: average system}
  \subscr{\dot x}{av} = \subscr{A}{av} \subscr{x}{av} +
  B \subscr{u}{av},
\end{align}
where $\subscr{A}{av} = \frac{1}{T}\sum_{i=1}^m d_i A_i$, and
$\subscr{u}{av} = [\subscr{u}{av,$1$} \dots \subscr{u}{av,$\subscr{n}{r}$}]$,
$\subscr{u}{av,$i$} = (1/T ) \int_{0}^{T} u_i (\tau) ~d \tau$.
We note that the averaging technique preserves the sparsity 
pattern of the network, that is, $\subscr{A}{av}(i,j) \neq 0$ if and only if 
$A_k(i,j) \neq 0$ for some  $k$.

In general, the approximation of the behavior of the switching system
\eqref{eq: switching model} with  the  average dynamics  
\eqref{eq: average system} is accurate if the operating  period $T$ is short in 
comparison to the underlying system dynamics.
Under suitable technical assumptions, the deviation of average models 
with respect to the network instantaneous state has been characterized 
in e.g. \cite{canudas2017average}.
In \cite{canudas2017average}, the authors formally provide a bound on the 
accuracy of the approximation, where the bound becomes tighter for 
decreasing values of $T$, and increasing values of road lengths.
Although a formal characterization of the performance of average model is 
often an extremely challenging task, and the formal results in 
\cite{canudas2017average} hold for a single roads in the presence of a single 
signalized intersections, the accuracy of averaging techniques has been shown 
to be satisfactory in similar applications (see e.g. \cite{grandinetti2018}).
 %
A numerical validation of the averaging technique, and its 
validity in relation to the cycle-time $T$, is discussed in 
Section~\ref{sec: simulations} (see Fig.~\ref{fig:averageVsSwitching}).


\subsection{Problem Formulation}
\label{subsec:problemFormulation}
In this paper, we consider the dynamical model 
\eqref{eq: average system} and focus on the problem of designing 
the durations of the green split functions so that a  measure of 
network  congestion is  optimized.
As illustrated through the relation 
$\subscr{A}{av} = \frac{1}{T}\sum_{i=1}^m d_i A_i$ 
(see \eqref{eq: average system}), the average model allows us to design the 
durations  of the network modes, rather than their exact  sequence.
This approach motivates the adoption of a two-stage optimization 
process. 
First, the durations of the modes is optimized by considering a joint model that 
captures the dynamics of the entire interconnection.
Second, offset control techniques (see e.g. \cite{coogan2017offset}) can be 
employed to decide the specific sequence of phases, given the durations of 
the splits and by considering local interconnection models.
In this paper, we focus on the first stage.
To formalize our optimization problem, we denote by $\subscr{y}{av}$ the 
vector of the queue lengths originated by the signalized intersections, and 
model $\subscr{y}{av}$ as the occupancy of the cells at roads downstream, 
that is,
\begin{align}
\label{eq:queueLengths_y_Cav}
\subscr{y}{av} = \subscr{C}{av} \subscr{x}{av}, \;
& &  \;
\subscr{C}{av} = 
\begin{bmatrix}
e_{\sigma_1}^\tsp & \dots & 0 \\
\vdots & \ddots \\
0 & \dots & e_{\sigma_{\subscr{n}{r}}}^\tsp
\end{bmatrix} .
\end{align}

We assume the network is initially at a certain initial state $x_0$, and 
focus on the problem of optimally designing the mode durations 
$\{ d_1, \dots d_m \}$ that  minimize the $\mc H_2$-norm
of the vector of queue lengths $\subscr{y}{av}$.
Namely, we consider the following dynamical optimization problem:
\begin{subequations}\label{opt: informal}
\begin{align}
 \min_{d_1,\dots,d_m} \;\;\;\;\;\;
  		&
\int_0^\infty \| \subscr{y}{av} \|_2^2 ~ dt,	
  		\nonumber \\[.3em]
  \text{subject to} \;\;\;\;
  &\subscr{\dot x}{av} =\subscr{A}{av} \subscr{x}{av}, 
  			\label{eq: constr a}\\
	&\subscr{y}{av} = \subscr{C}{av}\subscr{x}{av}, 	
			\label{eq: constr b} \\
&\subscr{x}{av}(0) = x_0, \label{eq: constr d} \\
&\subscr{A}{av} = \frac{1}{T} \left( d_1A_1 + \dots + d_m A_m \right),
		\label{eq: constr c} \\
& T = d_1 + \dots + d_m,\label{eq: constr e} \\
&d_i \ge 0 \quad  i\in \until{m} \label{eq: constr f}.
\end{align}
\end{subequations}
Loosely speaking, the optimization problem \eqref{opt: informal} seeks for 
an optimal set of split durations that minimize the $\mc L_2$-norm of the 
impulse-response of the system to the initial conditions $x_0$.
Thus, similarly to \cite{gomes2006optimal}, our framework considers the 
``cool down'' period. 
In this scenario, exogenous inflows and outflows are not 
known a priori, and the goal is to evacuate the network as fast as 
possible, where the final condition is an empty system.
Under this assumption, $\subscr{A}{av}$ and the solution to  
\eqref{opt: informal}, shall be re-computed when the current traffic conditions 
$x_0$ change significantly.
Finally, we note that constraint \eqref{eq: constr e} guarantees
feasibility of the resulting solutions. 
In fact, the time-normalization in \eqref{eq: constr e} implies that for any 
solution resulting from \eqref{opt: informal}, there exists (at least) one 
sequence of movements at each intersection, with overall cycle time $T$, and 
that abide the selected set of durations.

\section{Design of Optimal Network Mode Durations} 
\label{sec: optimal design}
In this section, we propose a numerical method to determine solutions to the 
optimization problem \eqref{opt: informal}.
The approach we discuss is centralized, namely, it requires 
knowledge of the state $x_0$ for all network cells. 
An extension of the framework to fit a distributed implementations is then 
discussed in Section~\ref{sec:distributed}.
At its core, the technique relies on rewriting the cost 
function of \eqref{opt: informal} in relation to the controllability Gramian of an 
appropriately-defined linear system, as we explain next. 

\begin{lemma}{\bf \textit{(Controllability Gramian Cost Function)}}
\label{lemma: gramian opt}
Let 
\begin{align*}
\mc W(\subscr{A}{av}, x_0)  = 
\int_0^\infty   
e^{\subscr{A}{av} t} 		x_0 	  	x_0^\tsp 		
e^{\subscr{A}{av}^\tsp   t 		} 		~~ dt .
\end{align*}
The following minimization problem is equivalent to 
\eqref{opt: informal}:
\begin{align}
\label{opt: gramian cost function}
\underset{d_1,\dots ,d_m}{\text{ min }}     \;\;\;& \quad 
		\trace 		\left(  \subscr{C}{av}  	~ \mc W(\subscr{A}{av}, x_0)  		
		~ \subscr{C}{av}^\tsp 		\right), 	   \nonumber \\
\text{ subject to} & \quad
\subscr{A}{av} = \frac{1}{T} \left( d_1A_1 + \dots + d_m A_m \right), 
\nonumber \\
& \quad T = d_1 + \dots + d_m, \nonumber \\
& \quad d_i \ge 0, \quad i\in \until{m}.
\end{align}

\end{lemma}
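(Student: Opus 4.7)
The plan is to show that, for any fixed choice of the decision variables $\{d_1,\dots,d_m\}$, the integral cost appearing in \eqref{opt: informal} is identically equal to the trace expression in \eqref{opt: gramian cost function}. Since the constraints \eqref{eq: constr c}--\eqref{eq: constr f} are identical to those in \eqref{opt: gramian cost function}, and the objective is the only remaining difference, establishing the equality of the cost functions pointwise in the decision variables immediately yields equivalence of the two optimization problems.

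First I would write down the closed-form solution of the homogeneous dynamics \eqref{eq: constr a} with initial condition \eqref{eq: constr d}, namely $\subscr{x}{av}(t) = e^{\subscr{A}{av} t} x_0$, so that the output in \eqref{eq: constr b} becomes $\subscr{y}{av}(t) = \subscr{C}{av} e^{\subscr{A}{av} t} x_0$. Then I would rewrite the squared norm as a trace, $\|\subscr{y}{av}(t)\|_2^2 = \trace\!\left( \subscr{y}{av}(t)\, \subscr{y}{av}(t)^\tsp \right)$, and expand:
\begin{align*}
\|\subscr{y}{av}(t)\|_2^2
= \trace\!\left( \subscr{C}{av} e^{\subscr{A}{av} t} x_0 x_0^\tsp e^{\subscr{A}{av}^\tsp t} \subscr{C}{av}^\tsp \right).
\end{align*}
Integrating both sides from $0$ to $\infty$ and exchanging the integral with the trace (a linear functional), together with the fact that $\subscr{C}{av}$ is constant in $t$, yields
\begin{align*}
\int_0^\infty \|\subscr{y}{av}\|_2^2 \,dt
= \trace\!\left( \subscr{C}{av} \!\left( \int_0^\infty e^{\subscr{A}{av} t} x_0 x_0^\tsp e^{\subscr{A}{av}^\tsp t} \, dt \right) \! \subscr{C}{av}^\tsp \right),
\end{align*}
which is exactly $\trace\!\left( \subscr{C}{av} \, \mc W(\subscr{A}{av}, x_0) \, \subscr{C}{av}^\tsp \right)$ by the definition of $\mc W$ in the lemma statement.

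The only subtlety I anticipate is the well-posedness of the improper integral defining $\mc W(\subscr{A}{av}, x_0)$: this is finite if and only if the trajectory $e^{\subscr{A}{av} t} x_0$ is square-integrable, which is implied by $\subscr{A}{av}$ being Hurwitz (or, more weakly, by $x_0$ lying in the stable subspace of $\subscr{A}{av}$). I would remark that if the integral diverges for some feasible $\{d_i\}$, then the original cost in \eqref{opt: informal} diverges as well, and the two problems still agree (both assigning value $+\infty$). Under the tacit assumption that the optimization is restricted to the set of mode durations that stabilize $\subscr{A}{av}$ - an assumption that is natural in the context of congestion minimization and that will be imposed explicitly in the sequel - the equivalence follows. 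Since no decision variable appears inside the integral but not in the new cost, and vice versa, the two problems share the same feasible set and the same objective, and hence the same set of optimizers.
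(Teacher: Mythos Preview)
Your proposal is correct and follows essentially the same approach as the paper: substitute the closed-form solution of the dynamics into the cost, rewrite $\|\subscr{y}{av}\|_2^2$ as a trace, and pull the time-independent matrices $\subscr{C}{av}$ outside the integral to recognize $\mc W(\subscr{A}{av},x_0)$. The paper's proof starts from $\|\subscr{y}{av}\|_2^2 = \subscr{y}{av}^\tsp \subscr{y}{av}$ and then applies the cyclic property of the trace, whereas you start directly from $\|\subscr{y}{av}\|_2^2 = \trace(\subscr{y}{av}\subscr{y}{av}^\tsp)$, but this is a cosmetic difference; your additional remark on well-posedness of the improper integral is a welcome clarification that the paper omits.
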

\smallskip

\begin{proof}
By incorporating \eqref{eq: constr a}, \eqref{eq: constr b}, and 
\eqref{eq: constr d} into the cost function of optimization problem 
\eqref{opt: informal}, we can rewrite:
\begin{align*}
 \int_0^\infty \|  \subscr{y}{av} \|_2^2 ~ dt 
 & =  		
\int_0^\infty   	x_0^\tsp 	e^{\subscr{A}{av}^\tsp t} 	
\subscr{C}{av}^\tsp  		\subscr{C}{av} 	e^{\subscr{A}{av} t} 	
x_0 	~ dt
& \\
&=
\trace 	\left(  	\int_0^\infty  	x_0^\tsp 	e^{\subscr{A}{av}^\tsp 
t} 
\subscr{C}{av}^\tsp  		\subscr{C}{av}  		e^{\subscr{A}{av} t}  	
x_0 		~ dt   \right) & \\
&= 
 \int_0^\infty  	\trace 		\left( 	\subscr{C}{av} 		
 e^{\subscr{A}{av} t}   	x_0  		x_0^\tsp 		e^{\subscr{A}{av}^
 \tsp t } 	
 \subscr{C}{av}^\tsp 		 \right)  dt & \\
 &=  
 \trace  		\left(   		\subscr{C}{av} 		\int_0^\infty   
e^{\subscr{A}{av} t} 		x_0 		x_0^\tsp 
e^{\subscr{A}{av}^\tsp t}  		dt  ~ 		\subscr{C}{av}^\tsp  	
\right) ,
&
\end{align*}
from which the claimed statement follows.
\end{proof}
Next, we note that the cost function of \eqref{opt: informal} is finite only if the 
choice of parameters $\{ d_1, \dots , d_m \}$ leads to a dynamical 
matrix $\subscr{A}{av}$ that is  Hurwitz-stable.
Requiring Hurwitz-stability of $\subscr{A}{av}$ corresponds to imposing
$\alpha (\subscr{A}{av}) <0$,  where $\alpha (\subscr{A}{av}) := 
\sup \{ \Re (s) : s \in \complex, \det ( s I - \subscr{A}{av} ) =0 \}$
denotes the spectral abscissa of $\subscr{A}{av}$.
The following result proves network stability under optimal sets of split 
durations.
\begin{theorem}{\bf \textit{(Stability of Optimal Solutions)}}
\label{lemma: feasibility}
Consider a traffic network $\mc N = (\mc R, \mc I)$, 
and assume that there exists a path from every node in $\mc N$ to at least 
one node in $\mc D$.
Let $s(r_i,r_k,\bar t) \neq 0$ for all $(r_i,r_k) \in \mc I$, and for some 
$\bar t \in [0,T]$.
Then,
\begin{align*}
\alpha ( \subscr{A}{av} ) < 0.
\end{align*}
\end{theorem}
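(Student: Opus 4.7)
The plan is to show that $\subscr{A}{av}$ is a Metzler matrix whose column sums are non-positive (strictly negative only at cells $\sigma_i$ of destination roads), and then invoke the stability theory of compartmental systems. Concretely, I would set this up by first inspecting the block structure of the instantaneous mode matrices $A_i$ and then averaging.

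First, for each mode $A_i$ and each column corresponding to a non-terminal cell $k<\sigma_j$ of road $r_j$, I would observe that the only non-zero entries are the diagonal $-\gamma_j/h$ and the sub-diagonal $\gamma_j/h$, so that column sum is $0$. For a terminal-cell column $\sigma_j$, the diagonal entry equals $-(\sum_\ell s(r_\ell,r_j,t)c(r_\ell,r_j) + \bar w_{r_j})$, while the off-diagonal blocks $A_{\ell j}$ contribute exactly $s(r_\ell,r_j,t) c(r_\ell,r_j)$ to that same column, making the total column sum $-\bar w_{r_j}$. Since averaging preserves column sums and non-negativity of off-diagonal entries, $\subscr{A}{av}$ is Metzler and
\begin{align*}
\mathbf 1^\tsp \subscr{A}{av} = -\sum_{r_j\in\mc D} \bar w_{r_j}\, e_{(j,\sigma_j)}^\tsp \le 0,
\end{align*}
with strict negativity precisely at the terminal cells of destination roads.

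Next, I would analyze the directed graph $\mc G$ of $\subscr{A}{av}$ (draw an edge $j\to i$ whenever $(\subscr{A}{av})_{ij}>0$). Within every road there is a directed path $1\to 2\to\cdots\to\sigma_j$ coming from the sub-diagonal entries $\gamma_j/h$. Between roads, the hypothesis $s(r_i,r_k,\bar t)\neq 0$ for some $\bar t$, together with $c(r_i,r_k)>0$, ensures that the averaged coefficient $\tfrac{1}{T}\int_0^T s(r_i,r_k,t)\,c(r_i,r_k)\,dt$ is strictly positive for every allowed movement $(r_i,r_k)\in\mc I$, so that $\mc G$ contains the edge $\sigma_k\to 1$ of road $r_i$ whenever $(r_i,r_k)$ is an allowed movement. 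Combining these intra-road and inter-road edges with the connectivity assumption yields a directed path in $\mc G$ from every cell to a terminal cell of some destination road.

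The conclusion then follows from a standard fact about Metzler matrices with non-positive column sums (equivalently, non-singular M-matrix theory applied to $-\subscr{A}{av}$): such a matrix is Hurwitz if and only if every index has a directed path in $\mc G$ to some index where the column sum is strictly negative. The previous step provides exactly this property, hence $\alpha(\subscr{A}{av})<0$. The main obstacle I anticipate is not the graph-theoretic argument itself, but making precise the accounting of the column sums under averaging and cleanly citing (or sketching) the compartmental/M-matrix stability criterion so that the connectivity and leakage assumptions translate unambiguously into strict negativity of the spectral abscissa.
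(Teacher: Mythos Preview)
Your argument is correct and rests on the same structural facts the paper uses: $\subscr{A}{av}$ is Metzler, its column sums are nonpositive and strictly negative precisely at terminal cells of destination roads, and every cell reaches such a leaky cell through the directed graph induced by intra-road sub-diagonals and averaged inter-road couplings. The paper proceeds slightly differently after establishing the sign structure: it reorders the state so that destination cells form a trailing block, obtains a block lower-triangular $\subscr{A}{av}=\begin{bmatrix}A_{11}&0\\A_{21}&A_{22}\end{bmatrix}$ (using that destination cells have no departing edges), argues $\alpha(A_{22})<0$ directly, and then invokes a grounded-Laplacian stability result for $A_{11}$. Your route avoids the block decomposition and the implicit ``no departing edges from $\mc D$'' assumption by applying the compartmental/M-matrix outflow-connectivity criterion in one shot; this is arguably cleaner and a bit more general, while the paper's version has the advantage of pointing to a concrete citable theorem. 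Either way the underlying mechanism---connectivity to leakage nodes in a column-substochastic Metzler matrix---is the same.
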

\begin{proof}
From the structure of \eqref{eq: switching model} and from the assumption
$s(r_i,r_k,\bar t) \neq 0$ follows that  $\subscr{A}{av}(i,i)<0$  for all  
$i \in \until n$,  while  $\subscr{A}{av}(i,j) \geq 0$ for all $j \in \until n$, 
$j \neq i$.
Moreover, all columns of $\subscr{A}{av}$ have nonpositive sum. 
In particular, the columns corresponding to destination cells have strictly 
negative sum, that is, 
$\sum_{i=1}^n \subscr{A}{av}(i,j) \leq 0$ for all $j \in \until n$, 
and $\sum_{i=1}^n \subscr{A}{av}(i,j) < 0$ for all $j$ such that $r_j \in \mc D$.
%
%
To show $\alpha ( \subscr{A}{av} ) < 0$, we use the fact that destination cells 
in $\mc D$ have no departing edges, and re-order the states so that
\begin{align*}
\subscr{A}{av} = 
\begin{bmatrix}
A_{11} & 0 \\
A_{21} & A_{22} 
\end{bmatrix},
\end{align*}
where $A_{22} \in \real^{n_d \times n_d}$, $n_d = | \mc D |$, 
is the submatrix that describes the dynamics of the destination cells, 
$A_{11} \in \real^{(n-n_d) \times (n-nd)}$, and 
$A_{21} \in \real^{n_d \times (n-nd)}$.
The fact $\alpha (A_{22})<0$ immediately follows from 
\eqref{eq: interconnection flows}.
The stability of $A_{11}$ follows from the connectivity assumption in the 
original network, and from the analysis of  grounded Laplacian matrices 
(see e.g. \cite[Theorem 1]{xia2017analysis}).
\end{proof}
%
%

%
For the solution of the optimization problem
\eqref{opt: gramian cost function}, we propose a method based on a 
generalization of the concept of spectral  abscissa, namely, the 
smoothed spectral  abscissa.
For a dynamical system of the form 
\eqref{eq: average system}-\eqref{eq:queueLengths_y_Cav}, 
the  smoothed spectral abscissa  \cite{vanbiervliet2009smoothed}
is defined  as  the  root $\tilde \alpha \in \real$ of  the implicit equation
\begin{align}
\label{eq: e-smoothed sp abs}
\trace \left( \subscr{C}{av} 
~ \mc W( \subscr{A}{av} -\tilde \alpha I, B) ~
 \subscr{C}{av}^\tsp \right)  = \epsilon^\inv,
\end{align}
where $\epsilon \in \real_{\geq 0}$.
It is worth noting that the root $\tilde \alpha$ is  unique 
\cite{vanbiervliet2009smoothed}, and for fixed $B$ and $\subscr{C}{av}$ it is 
a function of both $\epsilon$ and $\subscr{A}{av}$, namely 
$\tilde \alpha(\epsilon, \subscr{A}{av})$.

\begin{remark}{\bf \textit{(Properties of the Smoothed Spectral Abscissa)}}
For any $\epsilon > 0$, the smoothed spectral abscissa is 
an upper  bound to $\alpha (A)$, and this bound becomes  exact as 
$\epsilon  \rightarrow 0$.
To see this, we first observe that the integral
\sloppy{
$ \int_0^\infty e^{(\subscr{A}{av} -\tilde \alpha I)t} B 
B^\tsp e^{(\subscr{A}{av}-\tilde \alpha I)^\tsp t} dt $
}
exists and is finite for any  $\tilde \alpha > \alpha ( \subscr{A}{av} )$, 
as the function $e^{( \subscr{A}{av} -\tilde \alpha I) t}$ is bounded and 
convergent as $t \rightarrow + \infty$.
On the other hand, for any $\tilde \alpha < \alpha ( \subscr{A}{av} )$
 the function $e^{( \subscr{A}{av} -\tilde \alpha I) t}$ becomes unbounded for
$t \rightarrow + \infty$ and the above integral is infinite.
It follows that, the left-hand side of \eqref{eq: e-smoothed sp abs} is finite
only if  $\tilde \alpha > \alpha (A)$ or, in other words, for any finite $\epsilon$,  
$\tilde \alpha $ satisfies  $\tilde \alpha > \alpha (A)$.
\oprocend
\end{remark}

We observe that, by letting  $\tilde \alpha =0$ in   
\eqref{eq: e-smoothed sp abs}, the optimization 
problem \eqref{opt: gramian cost function} can equivalently be reformulated 
in terms of the smoothed spectral abscissa as follows:
\begin{align}
\label{opt: epsilon cost function}
\underset{d_1,\dots ,d_m, \epsilon }{\text{ min }} 
		\;\;\;& \quad \epsilon^\inv, \nonumber \\
\text{ subject to} & \quad
\subscr{A}{av} = \frac{1}{T} \left( d_1A_1 + \dots + d_m A_m \right), 
\nonumber \\
& \quad T = d_1 + \dots + d_m, \nonumber \\
& \quad d_i \ge 0, \quad i\in \until{m}, \nonumber \\
& \quad \tilde \alpha( \epsilon, \subscr{A}{av}) = 0 ,
\end{align}
where the parameter $\epsilon$ is now an optimization  variable.
In what follows, we denote with  $\{ d_1^*, \dots, d_m^*, \epsilon^* \}$
the value of the optimization parameters at optimality of 
\eqref{opt: epsilon cost function}.
Problem \eqref{opt: epsilon cost function} is a nonlinear optimization 
problem \cite{vanbiervliet2009smoothed}, because the optimization variables   
$\{ d_1, \dots, d_m \}$ and  $\epsilon$ are related by means of the nonlinear  
equation \eqref{eq: e-smoothed sp abs}.

For the solution of \eqref{opt: epsilon cost function}, we propose an  iterative 
two-stages numerical optimization process. 
In the first stage, we fix the value of $\epsilon$ and seek for a 
choice of $\{ d_1, \dots d_m \}$  that leads to a smoothed spectral abscissa 
that is identically zero.
In other words, we let $\epsilon = \bar \epsilon$, and solve the 
following minimization problem:
\begin{align}
\label{opt: sp abs cost fcn, fixed epsilon}
\underset{d_1,\dots ,d_m }{\text{min }} 
		\;\;\;& \quad | \tilde \alpha( \bar \epsilon, \subscr{A}{av}) |
		 \nonumber \\
\text{ subject to} & \quad
\subscr{A}{av} = \frac{1}{T} \left( d_1A_1 + \dots + d_m A_m \right), 
\nonumber \\
& \quad T = d_1 + \dots + d_m, \nonumber \\
& \quad d_i \ge 0, \quad i\in \until{m}.
\end{align}
It is worth noting that every $\subscr{\bar A}{av}$ that is solution to
\eqref{opt: sp abs cost fcn, fixed epsilon} and that satisfies
\sloppy{$\tilde \alpha( \bar \epsilon, \subscr{\bar A}{av} )  = 0$},
is a point in the feasible set of \eqref{opt: epsilon cost function},
which corresponds to a cost of congestion 
$\int_0^\infty \| \subscr{y}{av} \|_2^2 ~ dt  = 1/{\bar \epsilon}$.

In the second stage, we perform a line-search over the parameter 
$\epsilon$, where the value of $\epsilon$ is increased at every 
iteration until the minimizer $\epsilon^*$ is achieved.
We note that the optimizer of \eqref{opt: sp abs cost fcn, fixed epsilon} with 
$\epsilon = \epsilon^*$ is indeed $\{ d_1^*, \dots , d_m^* \}$, an optimal 
solution to \eqref{opt: epsilon cost function}.
Thus, heuristically, the progressive $\epsilon$-update step in the two stages 
optimization problem allows us to determine (local) solutions to 
\eqref{opt: epsilon cost function}. 

The remainder of this section is devoted to the description of the two-stage 
optimization process.
The benefit of solving \eqref{opt: sp abs cost fcn, fixed epsilon} as opposed 
to \eqref{opt: epsilon cost function} is that we can derive an expression for the 
gradient of $\tilde \alpha_{\bar \epsilon}$ with respect to 
the mode durations $\{ d_1, \dots , d_m \}$, as we illustrate next. 
In the remainder of this section, with a slight abuse of notation, we use the 
compact form 
$\tilde \alpha(\bar \epsilon, \subscr{A}{av}) = \tilde \alpha_{\bar \epsilon}$ and,
for a matrix $M = [m_{ij}] \in \real^{m \times n}$, we denote
its vectorization by $\supscr{M}{v} = [m_{11} \dots m_{m1}, m_{12} \dots  
m_{mn}]^\tsp$.

\begin{lemma}{\bf (\textit{Descent Direction})}
\label{lemma: gradient}
Let $\tilde \alpha_{\bar \epsilon} $ denote  the  
unique root  of  \eqref{eq: e-smoothed sp abs} with 
$\bar \epsilon \in \real_{>0}$.
Let $d = [ d_1, \dots , d_m ]^\tsp$, and let
$K = [\supscr{A}{v}_1 ~ \supscr{A}{v}_2 ~ \dots ~ \supscr{A}{v}_m]$.
Then,
 \begin{align*}
 \frac{\partial \tilde \alpha_{\bar \epsilon} }{\partial d} 
  =  K^\tsp
\supscr{  \left(		\frac{QP }{		\trace  \left( 	Q	P \right)		} \right) }{v}
 \end{align*}
 where $P \in \real^{n \times n}$ and $Q \in \real^{n \times n}$ 
are the unique solution to the two Lyapunov  equations
\begin{align}
\label{eq: lyapunov P and Q}
(	 \subscr{A}{av}	- \tilde  \alpha_{\bar \epsilon} I	) ~	P 		&+
		P 	(	\subscr{A}{av} - \tilde \alpha_{\bar \epsilon} I		)^\tsp 	+
			 x_0		x_0^\tsp \; 	= 	0  , 	 &\nonumber \\
(	 \subscr{A}{av}	-	\tilde \alpha_{\bar \epsilon} I	)^\tsp  	Q	&+		 
	Q (  \subscr{A}{av}	-	\tilde  \alpha_{\bar \epsilon} I	) 	~+
		 	\subscr{C}{av} \subscr{C}{av}^\tsp 	= 	0 ,
\end{align}
and  $I \in \real^{n \times n}$ denotes the identity matrix.
\end{lemma}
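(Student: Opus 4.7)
The plan is to regard $\tilde \alpha_{\bar \epsilon}$ as an implicit function of $d=[d_1,\dots,d_m]^\tsp$ defined by the scalar equation
\begin{align*}
g(d,\tilde \alpha) := \trace\!\left(\subscr{C}{av} P\, \subscr{C}{av}^\tsp\right) - \bar\epsilon^{-1} = 0,
\end{align*}
where $P=P(d,\tilde\alpha)$ is the unique solution of the first Lyapunov equation in the statement, and then to apply the implicit function theorem:
\begin{align*}
\frac{\partial \tilde\alpha_{\bar\epsilon}}{\partial d_i} = -\frac{\partial g/\partial d_i}{\partial g/\partial \tilde \alpha}.
\end{align*}
Both partial derivatives of $g$ require the sensitivity of the Lyapunov solution $P$, but the trick is to avoid computing $\partial P/\partial d_i$ explicitly by invoking the \emph{adjoint Lyapunov identity}: if $\widetilde A X + X \widetilde A^\tsp + W = 0$ and $\widetilde A^\tsp Z + Z \widetilde A + Y = 0$, then $\trace(YX)=\trace(ZW)$.

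First I would differentiate the Lyapunov equation $(\subscr{A}{av}-\tilde\alpha I)P + P(\subscr{A}{av}-\tilde\alpha I)^\tsp + x_0 x_0^\tsp = 0$ with respect to $d_i$ at fixed $\tilde\alpha$. The derivative $\partial P/\partial d_i$ then solves a Lyapunov equation with forcing $-\left(\tfrac{\partial \subscr{A}{av}}{\partial d_i}P + P\,\tfrac{\partial \subscr{A}{av}}{\partial d_i}^{\!\tsp}\right)$. Since $\partial g/\partial d_i = \trace\!\left(\subscr{C}{av}^\tsp \subscr{C}{av}\,\partial P/\partial d_i\right)$, applying the adjoint identity with $\widetilde A=\subscr{A}{av}-\tilde\alpha I$ and $Y=\subscr{C}{av}^\tsp \subscr{C}{av}$ (whose dual solution is exactly the $Q$ in the statement), and using the symmetry of $Q$ and $P$ together with cyclicity of the trace, collapses this to
\begin{align*}
\frac{\partial g}{\partial d_i} = 2\,\trace\!\left(Q\, \frac{\partial \subscr{A}{av}}{\partial d_i}\, P\right).
\end{align*}
The same recipe applied to $\tilde\alpha$ (differentiating the $P$-Lyapunov equation in $\tilde\alpha$ produces the forcing term $-2P$) gives the denominator
\begin{align*}
\frac{\partial g}{\partial \tilde\alpha} = -2\,\trace(QP).
\end{align*}

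Dividing and substituting the affine dependence $\partial \subscr{A}{av}/\partial d_i$ induced by $\subscr{A}{av} = \tfrac{1}{T}\sum_j d_j A_j$ yields
\begin{align*}
\frac{\partial \tilde\alpha_{\bar\epsilon}}{\partial d_i} = \frac{\trace\!\left((\partial \subscr{A}{av}/\partial d_i)\, PQ\right)}{\trace(QP)}.
\end{align*}
The last step is to repackage these scalars into the vectorized expression of the lemma. Using the identity $\trace(A^\tsp B)=(\supscr{A}{v})^\tsp \supscr{B}{v}$, each partial derivative becomes the inner product of $\supscr{A_i}{v}$ with $(QP/\trace(QP))^v$, so stacking them into a column vector produces exactly $K^\tsp\,\supscr{\bigl(QP/\trace(QP)\bigr)}{v}$, which is the claimed descent direction.

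The principal obstacle is the sensitivity analysis of the Lyapunov solution: a naive approach would require differentiating the matrix $P$ in closed form, which is intractable. The clever step is recognizing that the dual equation for $Q$ given in the statement is precisely the adjoint Lyapunov equation that, via the duality identity, eliminates the need for $\partial P/\partial d_i$ altogether; the remainder is routine trace manipulation and vectorization bookkeeping.
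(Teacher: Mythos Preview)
Your argument is correct and follows the same route the paper takes: express $\partial\tilde\alpha_{\bar\epsilon}/\partial d$ via the chain rule through $\subscr{A}{av}$, and use the known sensitivity $\partial\tilde\alpha_{\bar\epsilon}/\partial \subscr{A}{av} = QP/\trace(QP)$. The only difference is one of depth: the paper's proof is two lines---it writes the chain rule and then cites \cite[Theorem~3.2]{vanbiervliet2009smoothed} for the formula $\partial\tilde\alpha_{\bar\epsilon}/\partial \subscr{A}{av}$---whereas you re-derive that cited result from scratch via the implicit function theorem and the adjoint Lyapunov duality $\trace(YX)=\trace(ZW)$. So your proof is more self-contained, but structurally identical; the vectorization step at the end is exactly the ``$\partial\subscr{A}{av}/\partial d = K$'' part of the paper's chain rule.
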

\smallskip
\begin{proof}
The expression for the partial derivative of  the smoothed  spectral 
abscissa with respect to $d$  can be obtained from the composite function
\begin{align*}
 \frac{\partial \tilde \alpha_{\bar \epsilon} }{ \partial d} 
= 
\frac{\partial \subscr{A}{av}}{\partial d} ~
\frac{\partial \tilde \alpha_{\bar \epsilon}}{\partial \subscr{A}{av}} ,
\end{align*}
where   $\frac{\partial \subscr{A}{av}}{\partial d}$ follows immediately from 
 \eqref{eq: constr c}, and the expression for the derivative of 
$\tilde \alpha_{\bar \epsilon}$  with respect to $\subscr{A}{av}$  is given in 
 \cite[Theorem 3.2]{vanbiervliet2009smoothed}.
\end{proof}
We note that equations \eqref{eq: lyapunov P and Q} always admit unique 
solution. 
To see this, we use the fact that $\tilde \alpha$ is an upper bound to 
$\alpha (\subscr{A}{av})$, and observe that
$(\subscr{A}{av} - \tilde \alpha_{\bar \epsilon} I)$ is 
Hurwitz-stable for every $\subscr{A}{av}$. 

A gradient descent method based on Lemma \ref{lemma: gradient}
is illustrated in  Algorithm \ref{alg: min sm sp abs}.
Each iteration of the algorithm comprises  the following steps. 
First, (lines $4-7$) a (possibly non feasible) descent direction $\nabla$ is 
derived as illustrated in Lemma  \ref{lemma: gradient}. 
Second, (line $8-9$) a gradient-projection technique 
\cite{luenberger1984linear} is used to enforce 
constraints \eqref{eq: constr c}-\eqref{eq: constr f}.
The update-step follows (line $10$).
\begin{algorithm}[tb]
\KwIn{
Matrix $\subscr{C}{av}$, 
vector $x_0$, 
scalars  $\xi$, $\mu$}
\KwOut{
$\{ d_1^*, \dots d_m^*, \epsilon^* \}$ solution to \eqref{opt: informal} }
Initialize: 
$d^{(0)}$, 
$\bar \epsilon=0$, 
$k=1$ \\
\While{\text{$\tilde \alpha_{\bar \epsilon}^{(k)}=0$}}{
\Repeat{$ \mc P^{(k)} \nabla = 0$}{ 
Compute $\tilde \alpha_{\bar \epsilon}^{(k)}$ by solving 
\eqref{eq: e-smoothed sp abs}\;
Solve for $P$ and $Q$: 
		$(	 \subscr{A}{av}^{(k)}	- \alpha_{\bar \epsilon}^{(k)} I	) 	P 		+
		P 	(	\subscr{A}{av}^{(k)} - \alpha_{\bar \epsilon}^{(k)}  I		)^\tsp 	
		+	 x_0		x_0^\tsp 	= 	0  $; 
		$ ( \subscr{A}{av}^{(k)}	-	\alpha_{\bar \epsilon}^{(k)} I	)^\tsp  	Q	+		 
	Q (  \subscr{A}{av}^{(k)}	-	\alpha_{\bar \epsilon}^{(k)} I	) 	
		+ 	\subscr{C}{av} \subscr{C}{av}^\tsp 	= 	0 $\;
$  \frac{\partial \alpha_{\bar \epsilon}^{(k)} }{\partial d} \gets
		\frac{Q P}{\trace(QP)}$\;
$\nabla \gets  \tilde \alpha_{\bar \epsilon}
		 \frac{\partial \alpha_{\bar \epsilon}^{(k)} }{\partial d} $\;
Compute projection matrix $\mc P^{(k)}$\;
$d^{(k)}\gets  d^{(k)} - \mu ~ \mc P^{(k)} \nabla $\;
$\subscr{A}{av}^{(k)} \gets  
		\frac{1}{T} \left( d_1A_1 + \dots + d_m A_m \right)$\;
		$k \gets k+1$\;
}
$\bar \epsilon \gets \bar \epsilon + \xi$\;
}
\Return{$d$}\;
\caption{Centralized solution to \eqref{opt: informal}.}
\label{alg: min sm sp abs}
\end{algorithm}
Algorithm \ref{alg: min sm sp abs} employs a fixed stepsize 
$\mu \in (0,1)$, and a terminating criterion (line $13$) based on the
Karush-Kuhn-Tucker conditions for projection methods
\cite{luenberger1984linear}.
The $\epsilon$-update step, which constitutes the outer 
while-loop (line $2-14$), is then 
performed  at each iteration of the gradient descent  phase, and 
the line-search is terminated when $| \tilde \alpha_\epsilon |=0$
cannot be achieved.
To prevent the algorithm from stopping at local minimas, the gradient descent 
algorithm can be repeated over multiple feasible initial conditions 
$d^{(0)}$.

\begin{remark}{\bf (\textit{Complexity of 
Algorithm~\ref{alg: min sm sp abs}})}
The computational complexity of Algorithm~\ref{alg: min sm sp abs}
in relation to the network size can be derived as follows.
First, the computation of the current value of the smoothed spectral 
abscissa  can be achieved via a root-finding algorithm 
(such as the bisection algorithm) over the implicit  function  
\eqref{eq: e-smoothed sp abs}.
The complexity of this operation is a logarithmic function of the 
desired accuracy, and it requires the computation of the trace of the 
Gramian matrix.
Thus, for given accuracy, the complexity of this operation is $\mc O (n)$.
Second, Algorithm~\ref{alg: min sm sp abs} requires to determine solutions
to the pair of Lyapunov equations  \eqref{eq: lyapunov P and Q}. 
Modern methods to solve Lyapunov equations 
\cite{bartels1972solution} rely on the Schur decomposition of the 
matrix $\subscr{A}{av} $,  whose complexity is $\mc O(n^3)$.
It is worth noting that, given the Schur decomposition 
$\subscr{A}{av} = U T U^\tsp$, where $T$ is upper triangular and 
$U$ is unitary, a decomposition for $(\subscr{A}{av} - \tilde \alpha I)$
follows immediately by shifting $T$ to $(T-\tilde \alpha I)$. Therefore, a single 
decomposition is required at each iteration of the gradient descent and the 
complexity of  Algorithm~\ref{alg: min sm sp abs} is therefore
$\mc O(n^3)$.
  \oprocend
\end{remark}

\section{Distributed Gradient Descent}
\label{sec:distributed}
The centralized computation of $\{ d_1^*, \dots, d_m^*, \epsilon^* \}$ assumes 
complete knowledge of the matrices $A_1, \dots A_m$, and requires to 
numerically solve the pair of Lyapunov equations \eqref{eq: lyapunov P and Q}.
For large-scale traffic networks, such computation imposes a limitation in the 
dimension of the matrix $\subscr{A}{av}$ and, consequently, on the number of 
signalized intersections that can be optimized simultaneously.
Since the performance of the proposed optimization technique depends upon
the possibility of modeling and optimizing large network interconnections, 
a limitation on the number of modeled roads and intersections constitutes a 
bottleneck toward the development of more efficient traffic infrastructures.
A possible solution to address this complexity issue is to distribute the 
computation  of the descent direction in Algorithm \ref{alg: min sm sp abs} 
among a group of agents, in a way that each agent is 
responsible for a subpart of the entire computation (e.g. see 
Fig.~\ref{fig: distributed grid}). 
Agents can represent geographically-distributed control centers or clusters in 
parallel computing, each responsible for the control of a group of intersections.

In order to distribute the computation of solutions to 
\eqref{eq: lyapunov P and Q}, in this section we focus on distributively solving 
Lyapunov equations of the form
\begin{align}
\label{eq: lyapunov X}
\Lambda X + X \Lambda^\tsp + D = 0,
\end{align}
where $X = X^\tsp \in \real^{n \times n}$ is an unknown matrix, 
$D=D^\tsp \in \real^{n \times n}$, 
$\Lambda \in \real^{n \times n}$.
Let $\Lambda$ be partitioned as 
\begin{align*}
\Lambda = \Lambda_1 + \dots + \Lambda_\nu,
\end{align*}
where $\Lambda_i \in \real^{n \times n}$, $i \in \until \nu$. 
We assume that each agent $i$ knows $\Lambda_i$ only.
Note that $\Lambda_i$ are sparse matrices, and their sparsity pattern depends 
upon the subpart of infrastructure associated with that agent.
In addition, we assume that  neighboring agents are allowed to exchange 
information by means of a communication infrastructure.
Let $\mc G = (\mc V, \mc E)$ be the communication  graph, where each vertex 
$i \in \until \nu$ represents one agent, and $\mc E \subseteq \mc V \times \mc V$ 
represents the interconnection.
The following lemma constitutes the key ingredient of the method we propose 
to distributively compute $X$.

\begin{lemma}{\bf \textit{(Distributed Solutions to \eqref{eq: lyapunov X})}}
\label{lemma: distributed Lyapunov solution}
Consider a Lyapunov equation of the form \eqref{eq: lyapunov X}, and let 
$\Lambda$ be Hurwitz-stable.
The following statements are equivalent:
\begin{enumerate}[label=(\roman*)]
\item $X^*$ solves \eqref{eq: lyapunov X};
\item For all $i \in \until \nu$, there exists $D_i \in \real^{n \times n}$ s.t.
\begin{align*}
\Lambda_i  X^* + X^* \Lambda_i^\tsp + D_i = 0, \text{ and }
\sum_{i=1}^\nu D_i = D.
\end{align*}
\end{enumerate}
\end{lemma}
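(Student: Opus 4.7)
The plan is to prove the equivalence by handling the two implications separately, both by direct algebraic manipulation exploiting the linearity of the Lyapunov operator in $\Lambda$.

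For the direction (ii) $\Rightarrow$ (i), I would simply sum the $\nu$ local equations. Using $\sum_i \Lambda_i = \Lambda$ and $\sum_i D_i = D$, the sum collapses to $\Lambda X^* + X^*\Lambda^\tsp + D = 0$, so $X^*$ solves \eqref{eq: lyapunov X}. This direction requires no hypothesis on $\Lambda$ being Hurwitz.

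For the direction (i) $\Rightarrow$ (ii), the natural candidate is to \emph{define} $D_i := -\Lambda_i X^* - X^* \Lambda_i^\tsp$ for each $i \in \until \nu$. Then by construction each local equation $\Lambda_i X^* + X^* \Lambda_i^\tsp + D_i = 0$ holds trivially. It remains to verify the consistency condition $\sum_{i=1}^\nu D_i = D$: this follows by summing the definitions and using $\sum_i \Lambda_i = \Lambda$, which yields $\sum_i D_i = -\Lambda X^* - X^* \Lambda^\tsp = D$, where the last equality is precisely the assumption that $X^*$ solves \eqref{eq: lyapunov X}.

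I do not expect any significant obstacle here; the statement is essentially a linearity/decomposition observation about the Lyapunov operator $X \mapsto \Lambda X + X \Lambda^\tsp$, combined with the additive splitting of $\Lambda$. The Hurwitz hypothesis on $\Lambda$ in the statement seems to play no role in the equivalence itself (both directions are purely algebraic), but it is presumably included so that $X^*$ is guaranteed to exist and be unique when $D$ is symmetric positive semidefinite, which is the regime of interest for the application to \eqref{eq: lyapunov P and Q}. I would mention this briefly and then close with the two short calculations above.
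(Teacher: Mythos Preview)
Your proposal is correct and follows essentially the same approach as the paper: for (i) $\Rightarrow$ (ii) the paper also defines $D_i := -(\Lambda_i X^* + X^* \Lambda_i^\tsp)$, and for (ii) $\Rightarrow$ (i) it also sums the local equations. Your observation that the Hurwitz hypothesis is not actually needed for the algebraic equivalence is accurate; the paper invokes it only to assert uniqueness of $X^*$, which (as you note) is relevant for the surrounding context rather than for the equivalence itself.
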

\smallskip
\begin{proof}
In order to prove the claim, we first observe that under the assumption of 
Hurwitz-stability for $\Lambda$,  the solution $X^*$ to 
\eqref{eq: lyapunov X}  is unique.\\
$(i) \Rightarrow (ii).$
Let $X^*$ denote the unique solution to \eqref{eq: lyapunov X}. 
By expanding 
$\Lambda = \Lambda_1 + \dots + \Lambda_\nu$, we obtain
\begin{align*}
\sum_{i=1}^{\nu}( \Lambda_i X^* + X^* A_i^\tsp )+ D =0.
\end{align*}
Thus, by letting  $D_i = -( \Lambda_i X^* + X^* A_i^\tsp )$, 
$(ii)$ immediately follows.\\
$(ii) \Rightarrow (i).$ 
Let $(\tilde X$, $\tilde D_1, \dots , \tilde D_\nu)$ satisfy $(ii)$, that is, 
for all $i \in \until \nu$
\begin{align*}
& \Lambda_i  \tilde X + \tilde X \Lambda_i^\tsp + \tilde D_i = 0, &&
& \sum_{i=1}^\nu \tilde D_i = \tilde D.
\end{align*}
Notice that the existence of the solution to \eqref{eq: lyapunov X} guarantees 
the existence of  $(\tilde X$, $\tilde D_1, \dots , \tilde D_\nu)$.
By substitution, we obtain  
$- \sum_{i+1}^\nu( \Lambda_i  \tilde X + \tilde X \Lambda_i^\tsp ) = D$, or 
in other words, $\tilde X$ satisfies 
$ \Lambda  \tilde X + \tilde X \Lambda^\tsp +D=0$.
The uniqueness of the solution to \eqref{eq: lyapunov X} implies 
$\tilde X = X^*$ and concludes the proof.
\end{proof}

The method we propose is based on a simultaneous and cooperative reconstruction 
of the matrices $X^*$ and $D_i$, and is described next.
For all $i \in \until \nu$, we let  
$\bar \Lambda_i = \Lambda_i \otimes I +  I \otimes \Lambda_i$.
Then, Lemma \ref{lemma: distributed Lyapunov solution} allows us to  
restate \eqref{eq: lyapunov X} as a system of equations of the form
\begin{align}
\label{eq: distributed linear system}
\underbrace{
\begin{bmatrix}
\bar \Lambda_1 & I & 0 & \cdots & 0 \\
\bar \Lambda_2 & 0 & I & & \vdots\\
\vdots  & \vdots & \ddots & \ddots \\
0 & I & \cdots & I & I
\end{bmatrix}
}_{H}
\underbrace{
\begin{bmatrix}
\supscr{X}{v} \\  \supscr{D}{v}_1 \\ \vdots \\  \supscr{D}{v}_\nu
\end{bmatrix}
}_w
= 
\underbrace{
\begin{bmatrix}
0 \\ 0 \\ \vdots \\ \supscr{D}{v}
\end{bmatrix}
}_z,
\end{align}
where $ \supscr{X}{v}, \supscr{D}{v}_1, \dots , \supscr{D}{v}_\nu$ are 
now the unknown parameters.

\begin{figure}[!t]
  \centering \subfigure[]{
   \raisebox{4mm}{
    \includegraphics[width=.39\columnwidth]{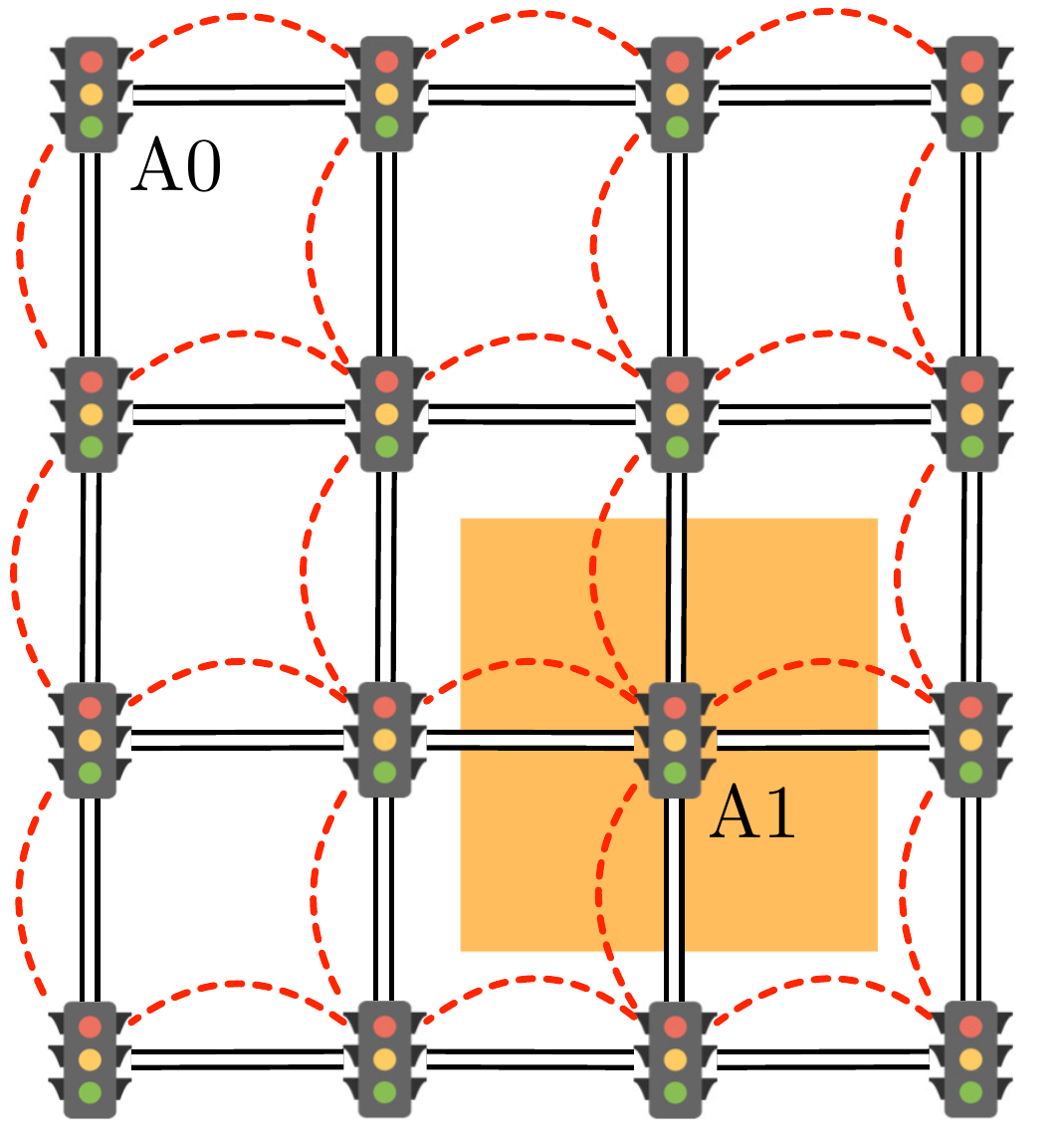} 
        \label{fig: distributed a} } }
 \subfigure[]{
    \includegraphics[width=.48\columnwidth]{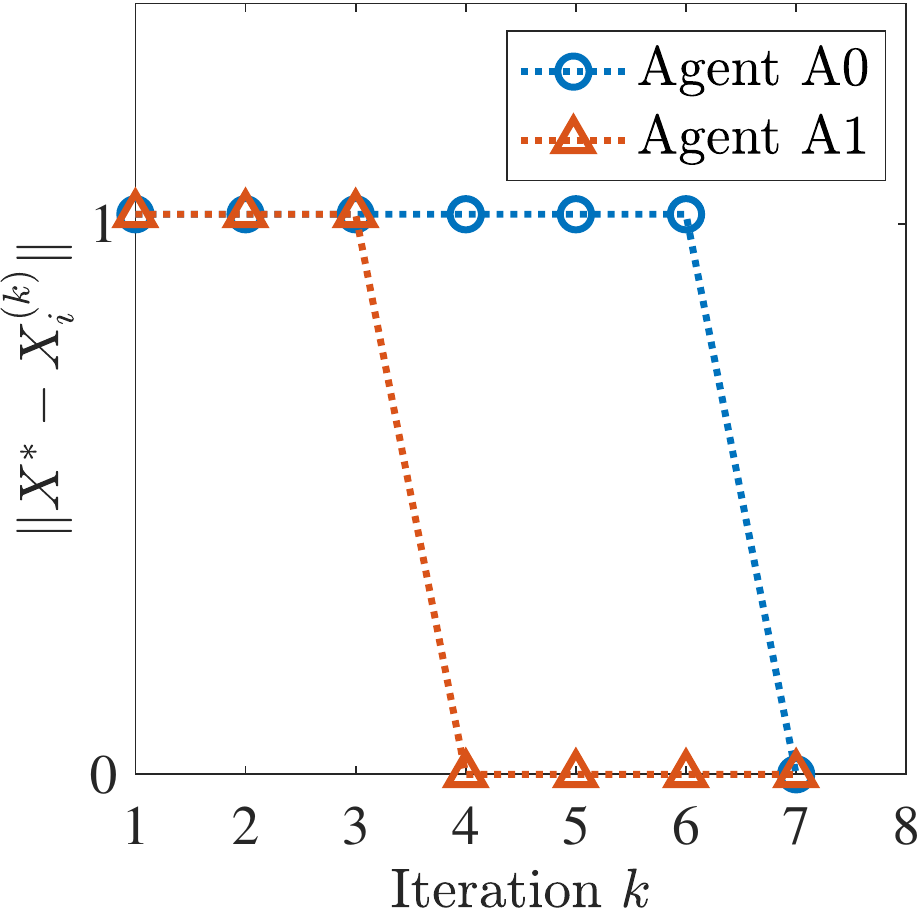}  
    \label{fig: distributed b}}
\caption[]{Manhattan-like traffic network and convergence of the 
distributed algorithm (i)-(iv). 
(a) Each agent, represented by a signalized intersection, has local knowledge 
of the road interconnection (continuous black lines) and is allowed to 
communicate with neighboring intersections (dashed red lines).
Shaded area depicts the subset information that is available to agent A1.
(b) Error between distributed solution and centralized solution vs iterations. 
The plot illustrates how internal agents (e.g. A1) show faster convergence due 
to shorter longest paths within the graph. }
  \label{fig: distributed grid}
\end{figure}

In order to distribute the computation of the vector $w$ (and thus $X^*$)
among the $\nu$  distributed agents, we let
\begin{align*}
H_i = 
\begin{bmatrix}
\bar \Lambda_i & 0 & \cdots & I & \cdots & 0 \\
0 & I & \cdots  & & \cdots & I 
\end{bmatrix},
\quad \quad
z_i = 
\begin{bmatrix}
0 \\ \supscr{D}{v}
\end{bmatrix},
\end{align*}
for all $i \in \until \nu$. 
At every iteration $k$, each agent $i$ constructs a local estimate  
$\hat w^{(k+1)}_i $ (correspondingly $\hat X^{(k)}_i$) by performing the following 
operations in  order:
\begin{enumerate}
\item[(i)] Receive $\hat w^{(k)}_j$ and $K_j^{(k)}$ from neighbor $j$;
\item[(ii)] $\hat w^{(k+1)}_i = \hat w^{(k)}_i +
 [K_i^{(k)} ~0] [K_i^{(k)} ~K_j^{(k)}]^\dagger  (\hat w^{(k)}_i - \hat w^{(k)}_j)$;
\item[(iii)] $K_i^{(k+1)} = \Basis ( \text{Im}(K_i^{(k)}) \cap \text{Im}(K_i^{(k)}) )$;
\item[(iv)] Transmit $\hat w^{(k+1)}_i$ and $K_i^{(k+1)}$ to neighbor $j$;
\end{enumerate}
where,
\begin{align*}
\hat w_i^{(0)} = H_i^\dagger  z_i,
\quad \quad
K_i^{(0)} = \Basis( \Ker(H_i) ).
\end{align*}

While the convergence of the procedure (i)-(iv) can be ensured by means of an
approach similar to \cite{FP-RC-FB:10p}, in this paper we focus on providing 
numerical  validation to the algorithm, in the framework of transportation systems.
To this aim, we consider the Manhattan-like network interconnection 
depicted in Fig.~\ref{fig: distributed grid} \cite{lammer2008self}, 
and assume that each traffic intersection is equipped with a computational unit 
that is responsible for a  subpart of the computation of 
\eqref{eq: lyapunov P and Q}.
In other words, every traffic intersection denotes one agent of the communication 
graph $\mc G$, and is allowed to exchange information 
with neighboring intersections by means of  communication channels 
(dashed-red lines in Fig.~\ref{fig: distributed grid}).
We make the practical assumption that each agent has the sole knowledge of: 
(i) the local structure of the traffic interconnection, that is, the geographical layout 
of roads adjacent to that intersections (shaded area in Fig.~\ref{fig: distributed a}), 
and 
(ii) the current travel time of roads adjacent to that intersection.

We numerically illustrate in Fig.\ref{fig: distributed b} the convergence of the 
distributed procedure (i)-(iv), where we compare the accuracy of the 
local estimate $\hat X^{(k)}_i$ with respect to the centralized solution $X^*$, as 
a function of the iteration $k$.
As discussed in \cite{FP-RC-FB:10p}, this class of procedures shall compute
$\hat X^{(k)}_i = X^*$ in at most $\text{diam}(\mc G)$ steps, where 
$\text{diam}(\mc G)$ denotes the diameter of the communication graph 
$\mc G$.

\begin{figure}[!b]
  \centering \subfigure[]{
    \includegraphics[width=.75\columnwidth]{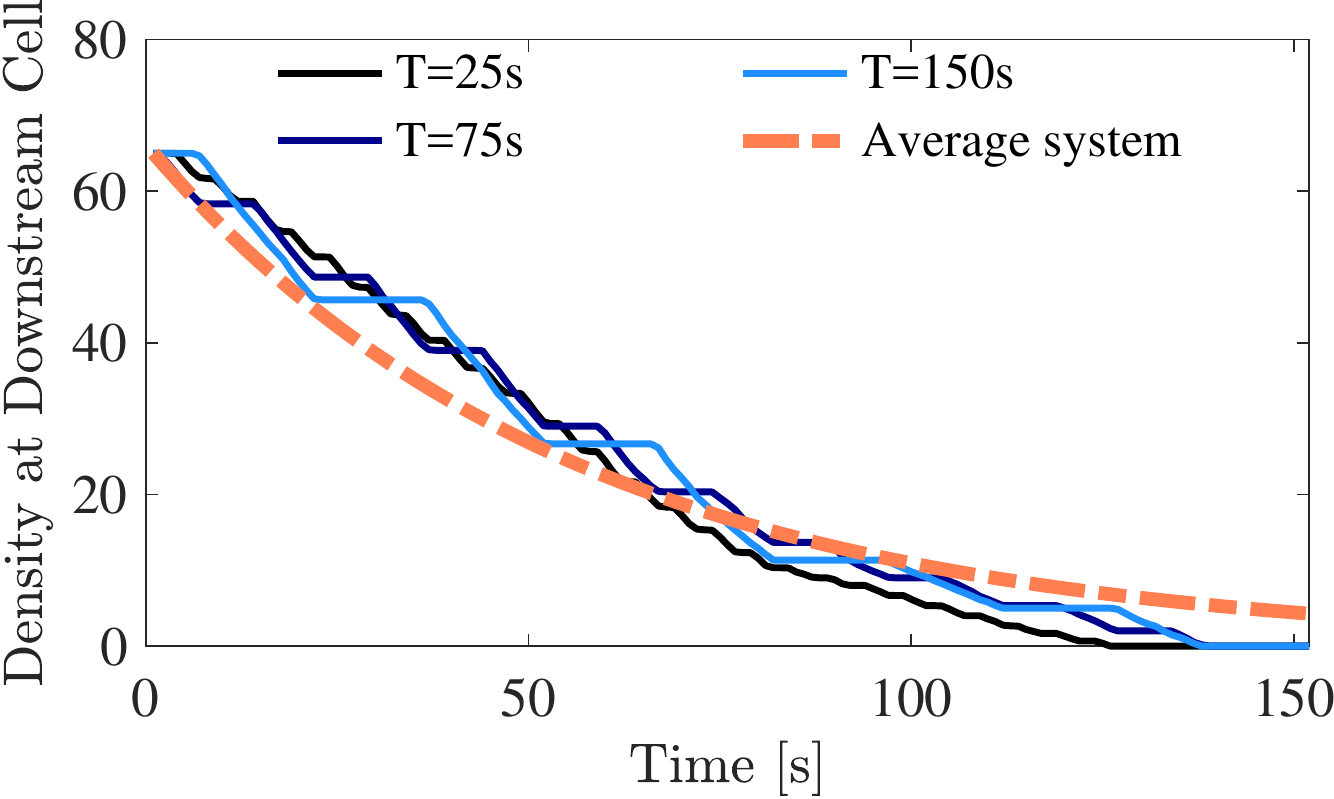} 
    \label{fig:averageVsSwitching_evolution}  }
 \subfigure[]{
    \includegraphics[width=.75\columnwidth]{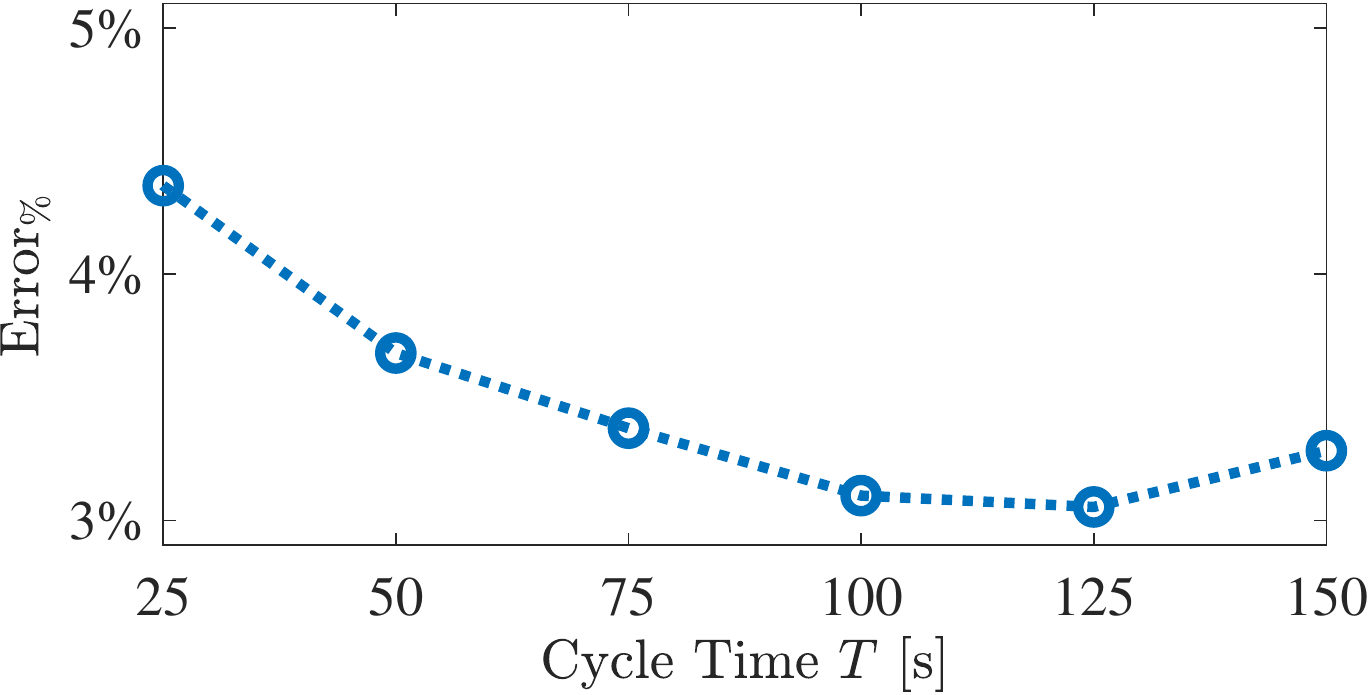}  
    \label{fig:averageVsSwitching_error}}
  \caption[]{Accuracy of average dynamical model \eqref{eq: average system} 
  with respect to microscopic simulations for a single signalized road. 
  (a) Time evolution of the density at the downstream cell for different 
  intersection cycle time $T$.   
  (b) Inaccuracies of the average dynamics in relation to the intersection cycle 
  time $T$.}
  \label{fig:averageVsSwitching}
\end{figure}

\begin{figure}[t]
  \centering \subfigure[]{
    \includegraphics[width=.44\columnwidth]{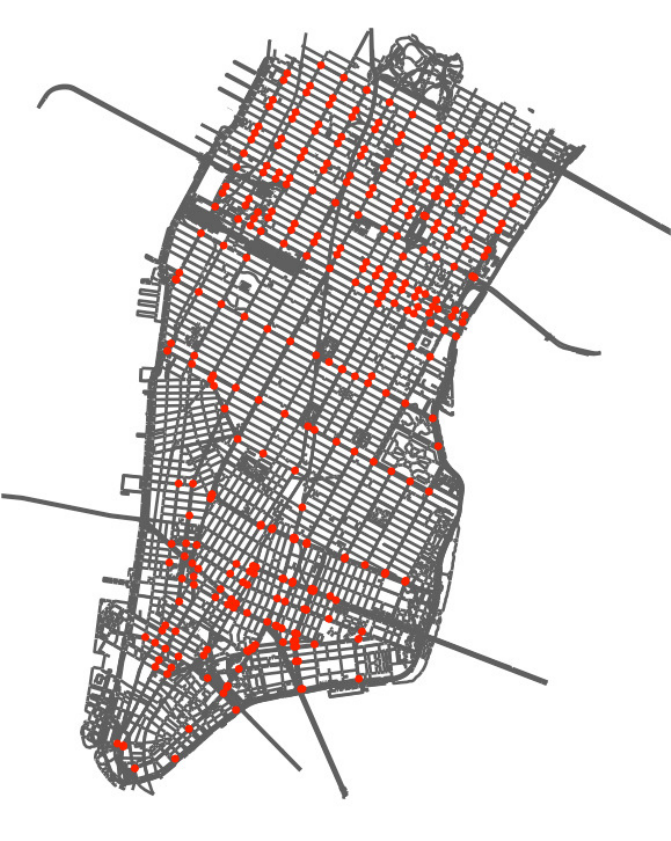} 
    \label{fig:manhattan_lights}  }
 \subfigure[]{
    \includegraphics[width=.46\columnwidth]{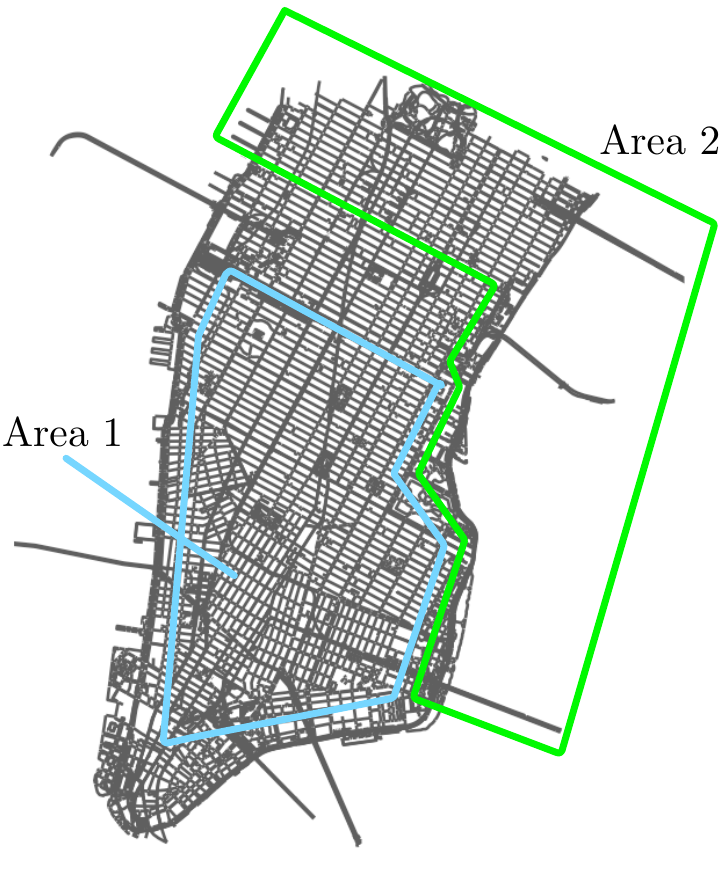}  
    \label{fig:manhattan_lights_areas}}
  \caption[]{Geographic map of urban traffic interconnection in Manhattan. 
  (a) Signalized intersections considered (red dots).   
  (b) Commute areas considered.}
  \label{fig:manhattan_map}
\end{figure}

\begin{figure*}[!t]
 \subfigure[]{
    \includegraphics[width=.65\columnwidth]{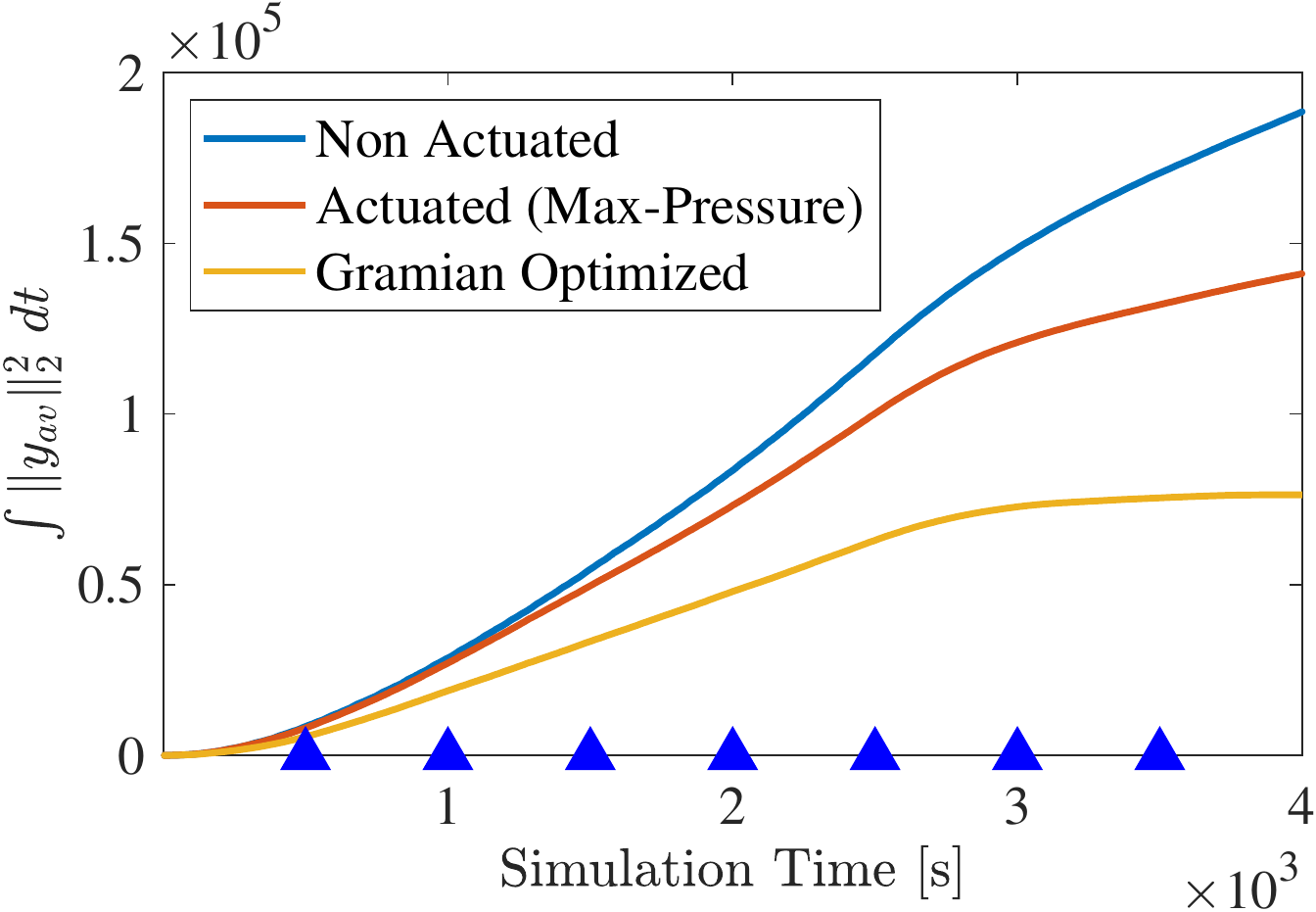}  
    \label{fig:manhattan_lights-costFcn}}
  \centering \subfigure[]{
    \includegraphics[width=.65\columnwidth]{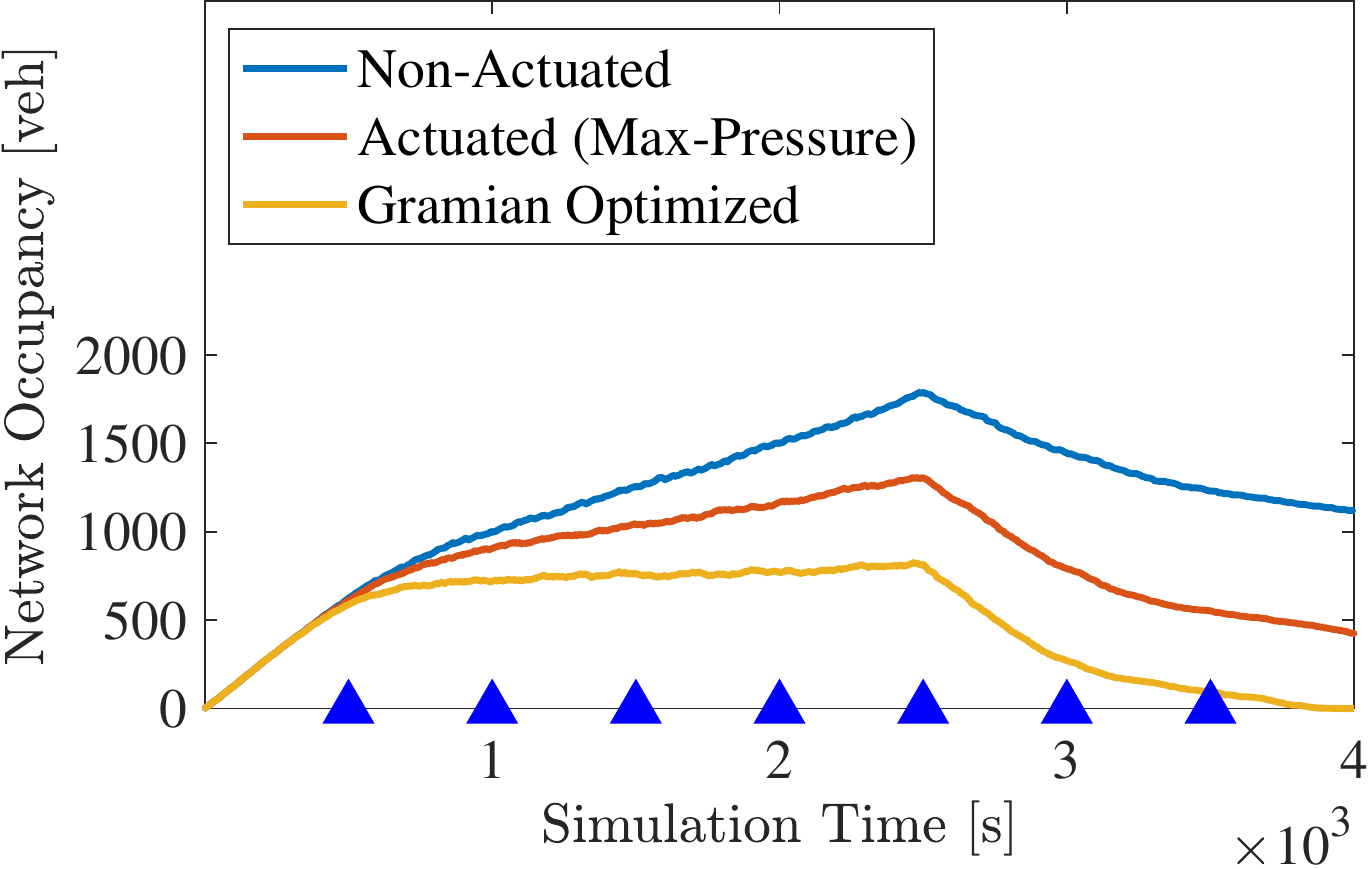} 
    \label{fig:SPABvsBP_noOfVehicles}  }
  \caption[]{Network performance of the Manhattan urban interconnection 
  assessed via a microscopic simulation for three control policies. 
Blue triangles denote the time instants where the solution to
\eqref{opt: epsilon cost function} is re-computed with updated $x_0$.
(a) Cost of congestion associated with traffic signaling 
(see optimization \eqref{opt: informal}). 
(b) Total number of vehicles in the network.}
  \label{fig:SPABvsBP}
\end{figure*}

\section{Simulations}
\label{sec: simulations}
This section provides numerical simulations in support to the 
assumptions  made in this paper,  and includes discussions and 
demonstrations of  the benefits of our methods.
We generate test cases using real-world traffic networks from the 
OpenStreet Map database and validate the techniques on a 
simulator based  on \emph{Sumo} 
(\emph{Simulation of Urban MObility} \cite{behrisch2011sumo}).

We first consider a single road, connected at downstream to a signalized 
intersection (e.g. Fig.~\ref{fig: phases}), and illustrate in 
Fig.~ \ref{fig:averageVsSwitching_evolution} a comparison between the 
trajectories of the switching system \eqref{eq: switching model} and 
the trajectories of the average dynamical model \eqref{eq: average system}.
The figure shows the discharging pattern over time of the cell at road 
downstream, resulting from a microscopic simulation.
In the simulation, the initial downstream cell density is $x^\sigma = 65$veh
and, for simplicity, there are no inflows to the road. 
Fig.~\ref{fig:averageVsSwitching_error} demonstrates the accuracy of  
the approximation for different choices of the split cycle time $T$.
The precision of the model can be quantitatively assessed by considering the 
following error-measure over a certain time horizon $[ 0, \mc H]$:
\begin{align}
\label{eq:errorPercent}
\text{Error}_{\%} = \frac{1}{\mc H } \int_0^{\mc H}  
	\frac{\| x - \subscr{x}{av} \|}{\|\subscr{x}{av}\|} ~ dt.
\end{align}
The relative error measure \eqref{eq:errorPercent} captures the deviation of 
\eqref{eq: average system} from the microscopic simulation, normalized with 
respect to the road density and the considered time horizon.
As indicated in Fig.~\ref{fig:averageVsSwitching_error}, the inaccuracies
due to linearization and time-averaging are under $5\%$ for common 
intersection cycle times.

\begin{table}[!t]
\caption{Manhattan Network Inflow Rates}
\begin{center}
\begin{tabular}{|c|c|c|}
\hline
\textbf{Time [sec]}&&\\
\textbf{(From - To)} & \textbf{\textit{Area 1 Inflow [veh/h]}} & \textbf{\textit{Area 2 Inflow [veh/h]}} \\
\hline
$0 - 2500$ & $4000$ & $0$  \\
$2500 - 4000$ & $0$ & $0$  \\
\hline
\end{tabular}
\label{tab:areaInflowRates}
\end{center}
\end{table}

\begin{figure*}[t]
  \centering \subfigure[]{
    \includegraphics[width=.26\columnwidth]{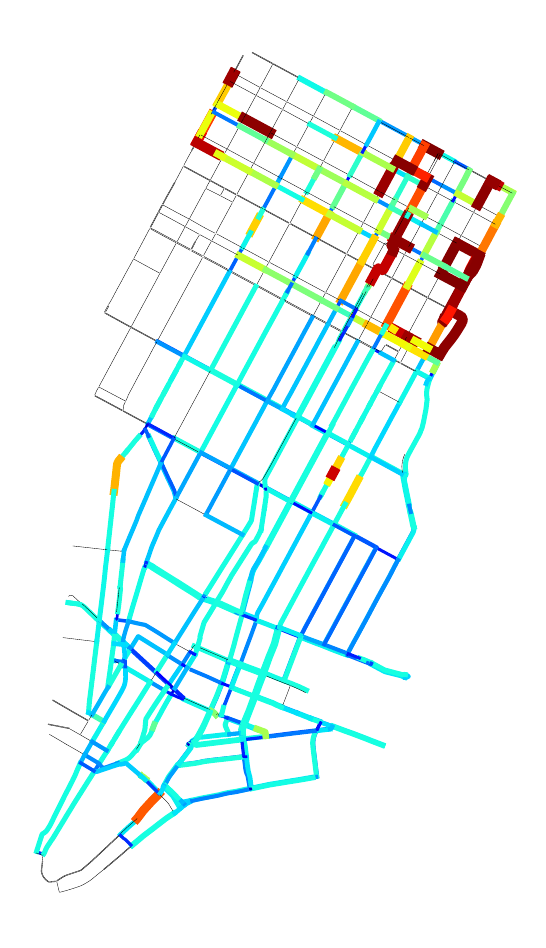} }
    \subfigure[]{
    \includegraphics[width=.26\columnwidth]{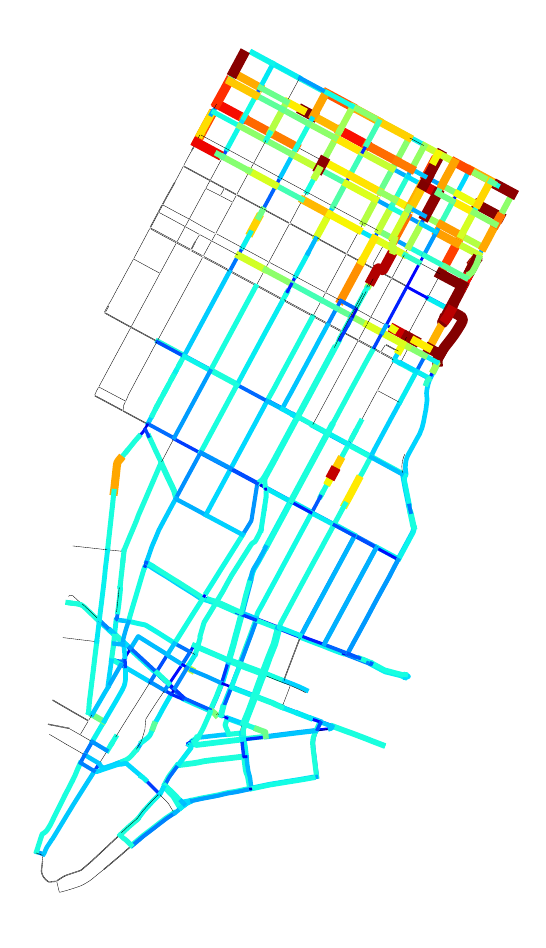} }
    \subfigure[]{
    \includegraphics[width=.26\columnwidth]{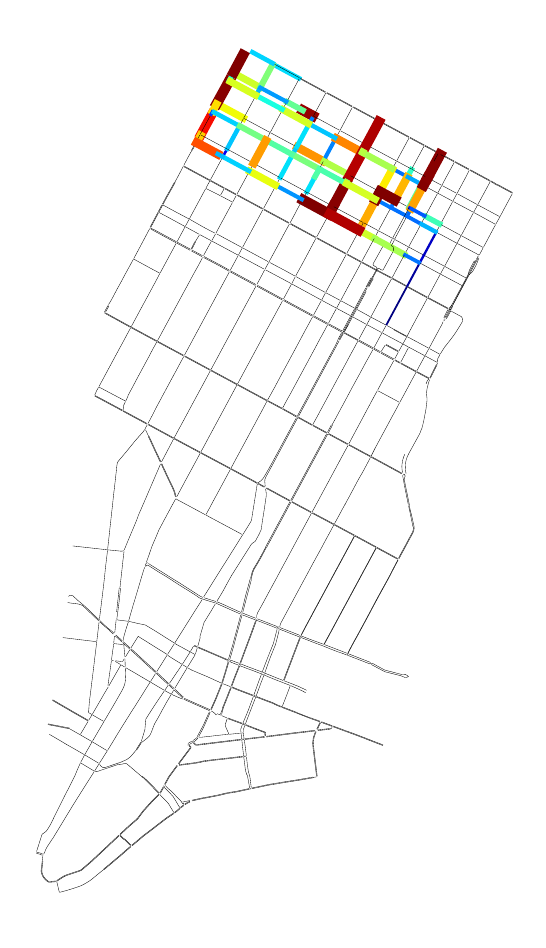} }
    \subfigure[]{
    \includegraphics[width=.33\columnwidth]{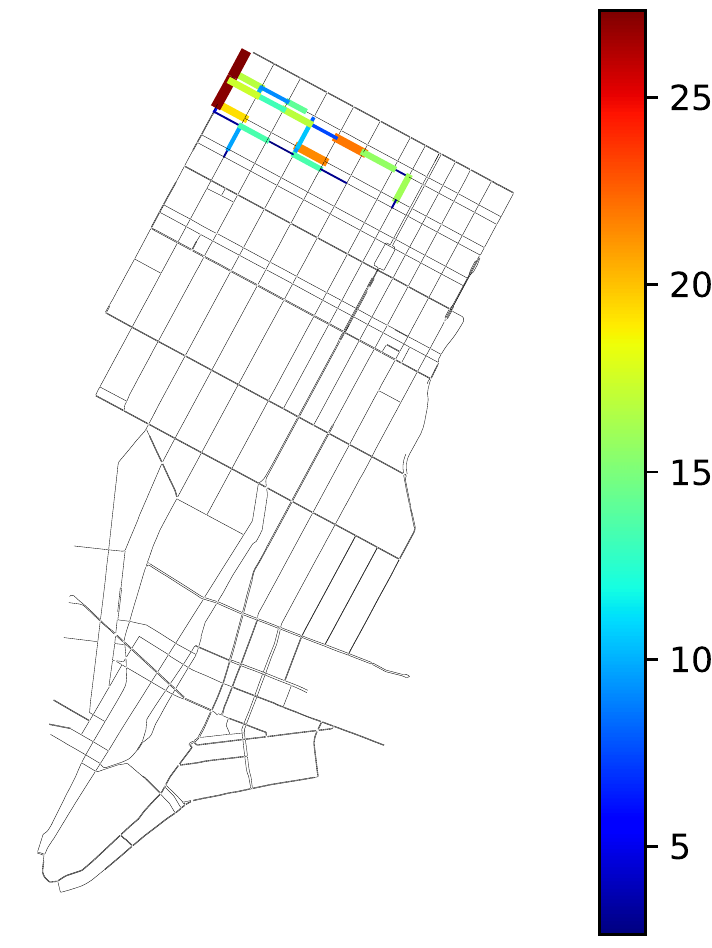}}\\
    \vspace{-.1cm}
 \subfigure[]{
    \includegraphics[width=.26\columnwidth]{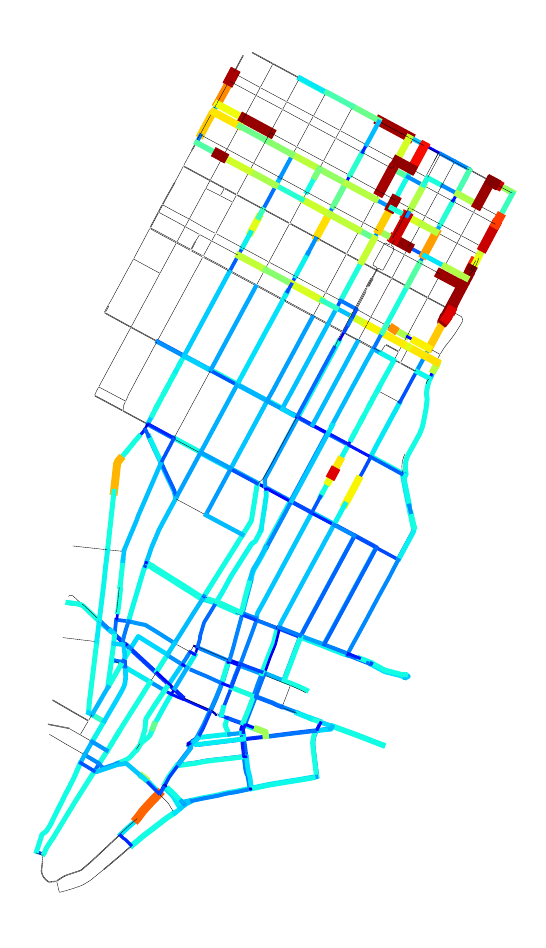} }
    \subfigure[]{
    \includegraphics[width=.26\columnwidth]{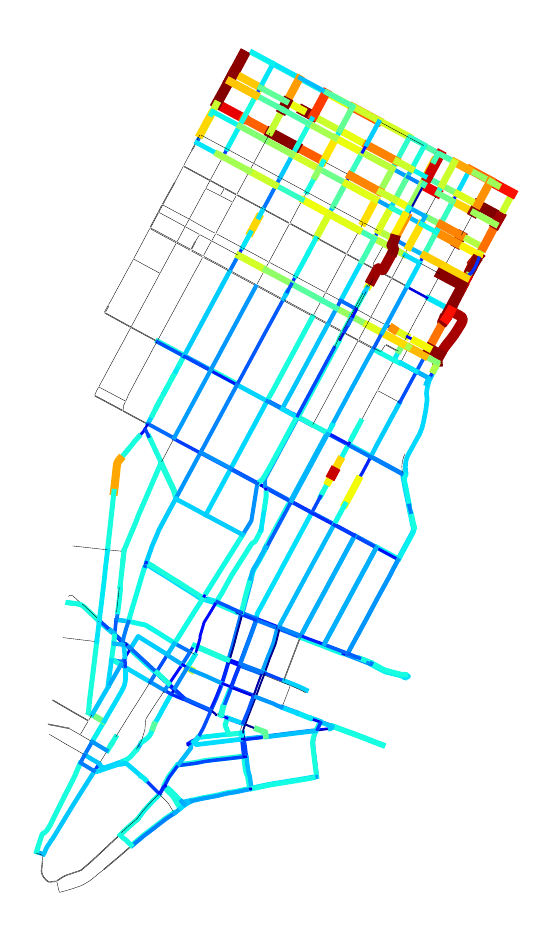} }
    \subfigure[]{
    \includegraphics[width=.26\columnwidth]{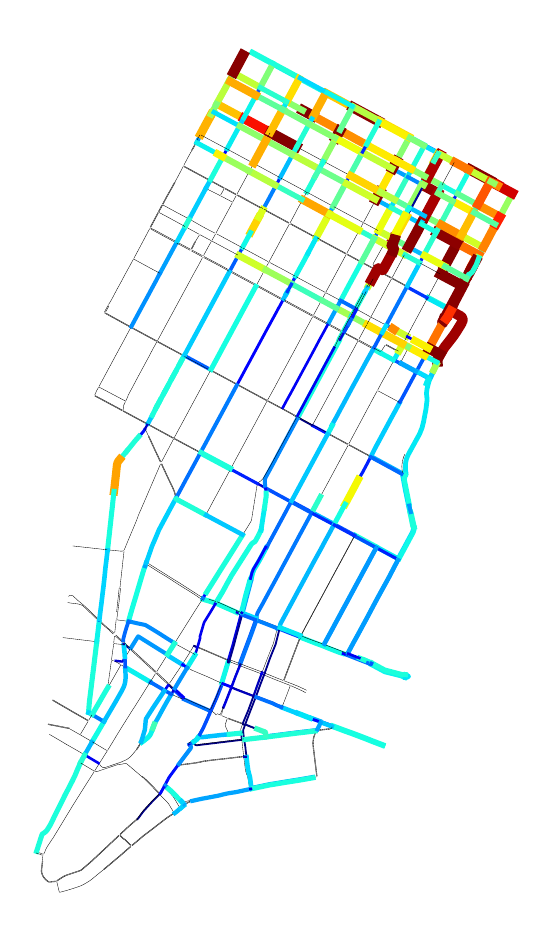} 
    \label{fig:manhattan}  }
    \subfigure[]{
    \includegraphics[width=.33\columnwidth]{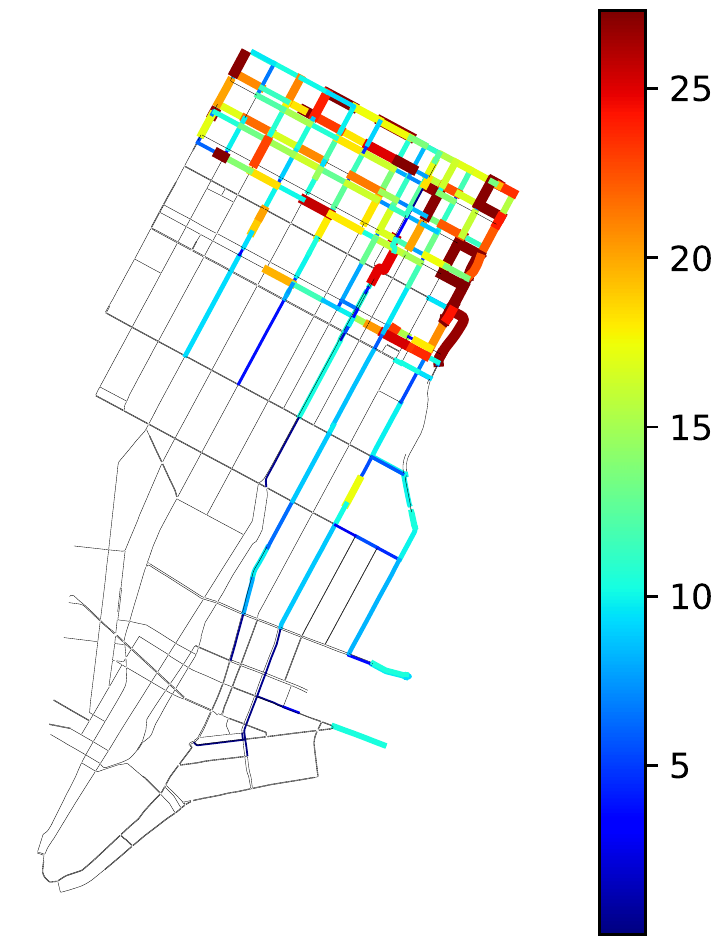} }
  \caption[]{Time evolution of the network state (traffic density [veh/cell]) for:
(a)-(d)   solution to \eqref{opt: informal};
(e)-(h)  Max-Pressure policies. 
((a) and (e)) $\text{simulation time}=1000$;
((b) and (f)) $\text{simulation time}=2000$; 
((c) and (g)) $\text{simulation time}=3000$; 
((d) and (h)) $\text{simulation time}=4000$.
The comparison shows that the solution to \eqref{opt: informal} leads to 
faster traffic dynamics that results in reduced travel time.}
  \label{fig:manhattan_time_evolution}
\end{figure*}

We then consider a test case (Fig.~\ref{fig:manhattan_map}) 
inspired by the area of Manhattan, NY,
which features $\subscr{n}{r}=958$ 
roads and $n_{\mc I} = 332$ signalized intersections. 
We replicate a daily-commute scenario, where sources of traffic flows
$\mc S$  are uniformly distributed in the  central area of the island (Area 1), 
and routing is chosen so that traffic flows are departing from the city, that is,
destinations $\mc D$ are uniformly distributed within Area 2. 
Each vehicle follows the shortest path between its source-destination pair, and the discretization is performed with $h=0.1$ miles.
The inflow-rates employed in the simulation are summarized in 
Table~\ref{tab:areaInflowRates}.
Fig.~\ref{fig:manhattan_lights-costFcn} shows a comparison, in terms of 
the cost function in \eqref{opt: informal}, resulting from the  microscopic 
simulation of:
(i) an implementation of the optimal solution to 
\eqref{opt: epsilon cost function},
(ii) adaptive distributed control policies (Max-Pressure 
\cite{varaiya2013max}), and
(iii) fixed time control \cite{papageorgiou2003review}.
For scenario (i), the output of the optimization 
\eqref{opt: sp abs cost fcn, fixed epsilon}
is employed to construct feasible sets of phases at each intersection, where 
cycle time is set to $T=100$sec, 
and no optimization is performed to decide the specific sequence of  
phases at each intersection.
Consequently to the argument presented in 
Section~\ref{subsec:problemFormulation},
the solution to \eqref{opt: epsilon cost function} is re-computed with  
updated traffic conditions (i.e. $x_0$) every $500$sec.
Fig.~\ref{fig:SPABvsBP_noOfVehicles} shows a comparison of the total number 
of vehicles in the network, for the three control techniques considered.
As highlighted from the simulation, the availability of an overall network model,
capable of capturing all the relevant network dynamics and dynamical 
interactions between diverse network components, results in reduced roads 
occupancy, which in turn implies improved network congestion, throughput, 
and vehicle travel time \cite{gomes2006optimal}. 
The effects of different control policies on the network overall congestion can 
be further visualized by means of the graphical illustration in 
Fig.~\ref{fig:manhattan_time_evolution}.
The figure shows the geographical displacement of traffic over time.
The graphic highlights that the the possibility to estimate the network current 
traffic conditions, combined with optimization problems of the form
\eqref{opt: informal}, result in faster system responses against congestion, 
that ultimately result in shorter ''cool down`` phases.
Notably, the network is evacuated faster as compared to 
techniques that rely on local knowledge of the traffic conditions.
Finally, the above simulation (Fig.~\ref{fig:manhattan_time_evolution}) also 
validates the argument presented in Section~\ref{subsec:problemFormulation}, 
that is, when exogenous inflows are unknown, the current inflow demands 
enter the optimization through the initial state $x_0$, and motivates our 
formulation \eqref{opt: informal}.

\section{Conclusions}
\label{sec: conclusions}
This paper describes a simplified model to capture in a tractable way the 
overall dynamics of urban traffic networks.
This model allows us to reformulate the goal of optimizing network congestion 
as the problem of minimizing  a metric of controllability of an appropriately 
defined dynamical system  associated with the network.
Our results show that the availability of an approximate model of the overall 
network can considerably improve the network efficiency, and allows for a 
more tractable analysis compared to traditional models.
We show how the performance of other distributed policies deteriorate  as 
compared to centralized models, due to the lack of availability of a global 
dynamical model capable of capturing all the relevant network dynamics.
We adapt our technique to fit distributed and/or parallel implementations, and 
provide an efficient way to optimize large groups of intersections under the 
practical assumption that each agent has only local knowledge on the 
infrastructure.
%
We envision that our model of traffic network and the proposed optimization 
framework will be useful in future research targeting design of traffic networks, 
control, and security analysis.


%

%
%
%

\ifCLASSOPTIONcaptionsoff
  \newpage
\fi


\bibliographystyle{elsarticle-num} 
\bibliography{alias,FP,BIB_traffic}

%

%
%




\end{document}